%% file: ex_article.tex
\documentclass[onefignum,onetabnum]{siamart171218}

\input{ex_shared}

\ifpdf
\hypersetup{
  pdftitle={Solving Convolution-type Integral Equations using Preconditioned Neural Operators},
  pdfauthor={Raymond H. Chan and Lingfeng Li}
}
\fi

\myexternaldocument{ex_supplement}

\newsiamremark{remarks}{Remark}
\newsiamthm{assumption}{Assumption}

\begin{document}
\newcommand{\bpsi}{\boldsymbol\psi}
\newcommand{\B}{\mathcal{B}}
\newcommand{\bomega}{\boldsymbol\omega}
\newcommand{\divg}{\text{div}}
\newcommand{\relu}{\text{ReLU}}
\newcommand{\requ}{\text{ReQU}}
\newcommand{\SP}{\text{SP}}
\newcommand{\tb}[1]{\textcolor{blue}{#1}}
\newcommand{\tr}[1]{\textcolor{red}{#1}}
\newcommand{\tp}[1]{\textcolor{purple}{#1}}
\newcommand{\normu}{\lVert u\rVert_{\B(\Omega)}}

\maketitle

\begin{abstract}
  Convolution-type integral equations arise from various fields, \textit{e.g.}, finite impulse response filters in signal processing and deblurring problems in image processing. When solving these equations, conventional numerical methods, like the multigrid method, can only efficiently solve the low-frequency components in the error, but not the high-frequency components. In this paper, we apply neural operators to address this issue. By adopting a preconditioning approach, we propose a novel training strategy that trains neural operators to solve the high-frequency components efficiently. Then, we combine the neural operators with some classical iterative solvers, like the weighted Jacobi method, to obtain an efficient hybrid iterative algorithm for the integral equations. We analyze the generalization error of our training strategy and the convergence of the hybrid iterative algorithm. We test our algorithms on large-scale and ill-conditioned linear systems discretized from one- and two-dimensional convolution-type integral equations. Our proposed algorithm significantly outperforms the multigrid method and the preconditioned conjugate gradient method in both iteration numbers and computational time.
\end{abstract}

\begin{keywords}
  Integral equation; Neural operator; Multigrid method; hybrid iterative solver
\end{keywords}

\begin{AMS}
  68T07, 45E10
\end{AMS}

\section{Introduction}

Solving linear systems is one of the most fundamental tasks in scientific computing.  In general, there are two types of algorithms for solving linear systems: direct methods and iterative methods. Direct methods can find the exact solutions to linear systems using some factorization techniques, \textit{e.g.}, the LU decomposition method. Although direct methods can find the exact solutions in a finite number of operations, their computational costs increase very fast with the problem size, which makes it impractical to solve large-scale linear systems. Iterative methods, in contrast, solve linear systems by iteratively refining their solutions. They are particularly useful for large-scale problems because of their low computational costs compared to direct methods. The solution of typical iterative methods, such as the Jacobi method, the Gauss-Seidel method, and the Krylov subspace methods, would approach the exact solution as the iteration number increases. The convergence rate usually depends on the condition number of the coefficient matrix. For very ill-conditioned matrices, preconditioning techniques can help accelerate the convergence.  One important class of linear systems is the Toeplitz systems, which arise from many applications, especially in the solution of convolution-type integral equations, in signal processing, and in image processing \cite{chui1982application,grenander1958toeplitz,king1989digital}. Toeplitz matrices are matrices with constant diagonals. By exploring this special structure, many efficient preconditioners have been proposed \cite{chan1988optimal,chan1994circulant,tyrtyshnikov1992optimal}.  

In the following, we use the terminology in \cite{xu2017algebraic} to call the eigenvectors corresponding to large (respectively, small) eigenvalues of a matrix the {\it algebraic} high-frequency (respectively, low-frequency) eigenvectors. If the eigenvector is itself highly-oscillating (respectively, slowly-oscillating), we call the vector a {\it geometric} high-frequency (respectively, low-frequency) eigenvector.  Iterative methods, such as the Jacobi and Gauss-Seidel methods, are called {\it smoothers} in that they tend to reduce the algebraic high-frequency components (eigenvectors corresponding to large eigenvalues) in the error much faster than the algebraic low-frequency components (eigenvectors corresponding to small eigenvalues). \cite{xu2017algebraic}. 

For PDE problems, the algebraic high-frequency and low-frequency components of the discretized systems coincide with the geometric high-frequency and low-frequency components, respectively, \textit{i.e.}, they are highly oscillatory and slowly oscillatory, respectively. In such cases, we can apply the multigrid method to solve the linear systems efficiently \cite{hemker1984some}. The idea of multigrid methods is to reduce the geometric high-frequency error using smoothers such as the Jacobi method, and remove the geometric low-frequency error using coarse-grid corrections. The multigrid method is often considered the optimal method for PDE problems, particularly in terms of computational efficiency \cite{briggs2000multigrid}. 

Some recent studies borrow the idea from multigrid methods and replace the coarse-grid correction step with neural operator solvers \cite{hu2024hybrid,zhang2022hybrid, Zhang2024BlendingNO}. For example, the DeepONet-based neural operators \cite{lu2021learning}, which are designed for operator learning in continuous spaces, can reduce the geometric low-frequency error \cite{zhang2022hybrid,Zhang2024BlendingNO}. This type of structure is known to be frequency-biased \cite{xu2020frequency}, \textit{i.e.}, it tends to learn the geometric low-frequency components in the solution rather than the high-frequency components during training. Therefore, it can be combined with smoothers to solve PDE problems. 

There also exist other types of neural operators that learn operators in discrete spaces, like convolutional neural networks (CNN) and Fourier neural operators (FNO) \cite{li2021fourier}. In \cite{huang2022learning,song2025physics}, the authors use CNNs as multigrid smoothers and apply them with coarse-grid corrections to solve PDE problems. Typically, neural operators require a long time of training before they can be used in applications, so these types of methods are useful when a linear system needs to be solved multiple times for different right-hand-side vectors, \textit{e.g.}, when simulating the evolution of fluid dynamics or heat transfer \cite{bergheau2013finite}. Besides, there are also some neural operators that integrate the multigrid framework into the structure directly for solving linear systems \cite{chen2022meta,he2024mgno}.

This paper concerns problems where the algebraic high-frequency (respectively, low-frequency) components do not correspond to geometric high-frequency (respectively, low-frequency) components. This occurs in some convolution-type integral equations with smooth kernels \cite{chan1997multigrid}, where the corresponding linear systems are typically ill-conditioned Toeplitz or block-Toeplitz. For these systems, the multigrid method may fail since both smoothers and coarse-grid corrections tend to reduce the same algebraic high-frequency (geometric low-frequency) components in the error. Finding a good smoother to solve the algebraic low-frequency (geometric high-frequency) components is a challenging task for this type of problem \cite{chan1997multigrid}.  

In this work, we aim to solve the algebraic low-frequency components using neural operators and then combine the operators with smoothers to form a new hybrid iterative algorithm for the integral equations. 
The main contributions and findings are summarized as follows:
\begin{enumerate}
    \item We developed a new algorithm that solves algebraic low-frequency components using neural operators. The main difference between our method and previous work \cite{hu2024hybrid,zhang2022hybrid, Zhang2024BlendingNO,huang2022learning,song2025physics} is that our method is designed to address algebraic frequency components, whereas most previous work focuses on specific geometric frequency components.

    \item Inspired by the framework in \cite{zhang2022hybrid, Zhang2024BlendingNO}, we apply a hybrid iterative algorithm combining neural operators and classical smoothers to solve convolution-type integral equations.

    \item We conduct some theoretical analysis of the proposed algorithm. We first estimate the generalization error when the neural operator is a two-layer ReLU network and derive an upper bound in terms of the number of training samples, problem size, and the optimization error. We then prove the convergence of the hybrid iterative algorithm when the neural operator satisfies some reasonable conditions. Lastly, we analyze how the choice of training loss function would affect the convergence rate of different algebraic frequency components in the training error.

    \item We conduct numerical experiments to solve large-scale and ill-conditioned integral equations using the proposed method. Our method outperforms multigrid methods and preconditioned conjugate gradient (PCG) methods significantly, both in terms of the iteration number and evaluation time. Besides, our experiments show that the convergence rate of our method is robust against the change in problem size and condition number.
\end{enumerate}


\section{Preliminaries}
\subsection{Convolution-type integral equations and Toeplitz matrices}
In image processing and data analysis, we often need to solve integral equations of the form:
\begin{equation}
    \alpha R(u)(z)+\int_\Omega \mathcal{K}(z-z')u(z')dz'=f(z),\quad z\in\Omega. \label{eq:integral equation}
\end{equation}
where $u$ is a continuous image or signal, $\alpha>0$ is the regularization weight parameter, $\mathcal{K}$ is a smooth convolution kernel, $\Omega$ is a fixed domain, and $R(u)$ is the regularization term. When $\mathcal K$ is a smooth function, \textit{e.g.}, a Gaussian smoothing kernel, the coefficient matrix of $\mathcal K$ would be very ill-conditioned, so a regularization term $R$ is required \cite{beck2009fast}.  Equation (\ref{eq:integral equation}) is called the integral equation of the second kind when $R(u)=u$, and is called the integral equation of the first kind when $\alpha=0$ \cite{polyanin2008handbook}.

Discretizing (\ref{eq:integral equation}) leads to a linear system  $A_n\mathbf x=\mathbf y$
where $A_n\in\mathbb{R}^{n\times n}$ is the coefficient matrix, which usually has some special structures like Toeplitz or Block-Toeplitz-with-Toeplitz-Block (BTTB). In this work, we consider the case when $A_n$ is symmetric positive definite (SPD). Typical choices for the regularization $R(u)$ include the Tikhonov regularization $u$, isotropic diffusion $\Delta u$, and the total variation $-\nabla\cdot(\nabla u/|\nabla u|)$. When the weight parameter $\alpha$ is small, $A_n$ is typically ill-conditioned.  

The matrix $A_n$ is called Toeplitz if each diagonal of its coefficient matrices is constant. We denote the value of elements at the main diagonal, $k$-th supdiagonal, and $k$-th subdiagonal as $a_0$, $a_k$, and $a_{-k}$ respectively.
Toeplitz systems are an important class of linear systems, and their solutions can be found efficiently by leveraging the special structure. Many fast solvers, including direct methods \cite{ammar1988superfast,bitmead1980asymptotically} and iterative methods \cite{chan1992circulant,chan1988optimal,tyrtyshnikov1992optimal}, have been proposed for solving Toeplitz systems. 

A $2\pi$-periodic Lebesgue integrable function $f:\mathbb{R}\rightarrow\mathbb{R}$ is called the generating function of $A_n$ if its Fourier coefficients are the entries of $A_n$: $a_k=\frac{1}{2 \pi} \int_{-\pi}^\pi f(\theta) e^{-i k \theta} d \theta,$ where $\quad k=0, \pm 1, \pm 2, \ldots.$
Let $\lambda_1(A_n)\geq \lambda_2(A_n)\geq\dots\geq\lambda_n(A_n)$ be the eigenvalues of $A_n$ sorted in descending order. It is proven that the eigenvalues of $A_n$ can be bounded by the range of its generating function \cite{grenander1958toeplitz}:
\begin{lemma}
Let $f$ be the generating function of $A_n$. Then, 
\begin{equation}
\min_{\theta\in[0,2\pi]}f(\theta)<\lambda_n(A_n) \leq \lambda_1(A_n)<\max_{\theta\in[0,2\pi]}f(\theta). \label{bounds}
\end{equation}
\end{lemma}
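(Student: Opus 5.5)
The plan is to use the Rayleigh quotient together with the representation of $A_n$ as a Fourier integral of $f$. Since the paper considers real symmetric $A_n$, $f$ is real-valued; we need $f$ not to be a.e. constant for the strict inequalities, which is implicit. If $f\equiv c$ a.e., then $A_n=cI$ and equality holds, so we assume $f$ is not essentially constant. For any $\mathbf x=(x_0,\dots,x_{n-1})^T\in\mathbb{C}^n$, $\mathbf x\neq 0$, substituting $a_{j-k}=\frac{1}{2\pi}\int_{-\pi}^{\pi} f(\theta)e^{-i(j-k)\theta}\,d\theta$ into $\mathbf x^*A_n\mathbf x=\sum_{j,k}\bar x_j a_{j-k}x_k$ gives
\begin{equation*}
\mathbf x^*A_n\mathbf x=\frac{1}{2\pi}\int_{-\pi}^{\pi} f(\theta)\,\Bigl|\sum_{k=0}^{n-1}x_k e^{ik\theta}\Bigr|^2 d\theta ,
\end{equation*}
where the sign convention in the exponent only reflects $\theta\mapsto-\theta$ and does not matter. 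By Parseval, $\frac{1}{2\pi}\int_{-\pi}^{\pi}|p(\theta)|^2d\theta=\|\mathbf x\|_2^2$ for the trigonometric polynomial $p(\theta)=\sum_k x_ke^{ik\theta}$.

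The non-strict bounds follow at once. Writing $m=\min f$ and $M=\max f$ (essential inf/sup if $f$ is merely integrable), we have
\begin{equation*}
m\|\mathbf x\|^2\le \mathbf x^*A_n\mathbf x\le M\|\mathbf x\|^2 .
\end{equation*}
By Courant--Fischer, $\lambda_1(A_n)=\max_{\mathbf x}\mathbf x^*A_n\mathbf x/\|\mathbf x\|^2$ and $\lambda_n(A_n)=\min_{\mathbf x}\mathbf x^*A_n\mathbf x/\|\mathbf x\|^2$, so $m\le\lambda_n\le\lambda_1\le M$.

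The main step is strictness. Take $\mathbf x$ to be a unit eigenvector for $\lambda_1$. Then
\begin{equation*}
M-\lambda_1=\frac{1}{2\pi}\int_{-\pi}^{\pi}\bigl(M-f(\theta)\bigr)|p(\theta)|^2d\theta .
\end{equation*}
The integrand is nonnegative. Since $p$ is a nonzero trigonometric polynomial, it has at most $n-1$ zeros in a period, so $|p|^2>0$ a.e. Because $f$ is not a.e. equal to $M$, the set $\{M-f>0\}$ has positive measure, and the integral is strictly positive. Hence $\lambda_1<M$, and the argument for $\lambda_n>m$ is symmetric. The only delicate point is this positivity: it relies on $p$ vanishing only on a null set and on $f$ not being essentially constant, and that is exactly where the non-degeneracy assumption on $f$ must be invoked.
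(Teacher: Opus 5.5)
Your proof is correct. The paper gives no argument of its own for this lemma, only a citation to Grenander--Szeg\H{o}, and your identity $\mathbf x^*A_n\mathbf x=\frac{1}{2\pi}\int f|p|^2\,d\theta$, combined with the fact that a nonzero trigonometric polynomial vanishes only on a finite set, is the classical proof behind that citation. You are also right that strictness requires $f$ not to be (essentially) constant: as printed, the lemma fails for $f\equiv c$, where $A_n=cI$. The standard statement of this result carries the hypothesis $\min f<\max f$, which the paper omits.
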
 

\subsection{Algebraic high/low frequency} \label{sec:algebraic_frequency}
The terms algebraic high/low frequency are commonly used in the analysis of algebraic multigrid methods \cite{xu2017algebraic}.  When $A_n$ is symmetric positive definite (SPD), all the eigenvalues are positive, and there exists a set of orthonormal eigenvectors $\{\mathbf v_i\}_{i=1}^n$ where $\mathbf v_i$ corresponds to $\lambda_i(A_n)$. The $\mathbf v_i$ corresponding to the large (resp. small) eigenvalues of $A_n$ are the algebraic high-frequency (resp. low-frequency) modes associated with $A_n$. Meanwhile, vectors can also be characterized as geometric high-frequency or geometric low-frequency based on the variation of their entries. Geometric high-frequency vectors refer to those vectors with rapidly oscillating entries, while geometric low-frequency vectors refer to those with smooth and slow oscillatory entries. Mathematically, for any given $\zeta_1,\zeta_2$ such that $\zeta_2\geq\zeta_1>0$, a vector $\mathbf v$ is called $\zeta_1-$algebraic low-frequency if \(\Vert\mathbf v\Vert_{A_nD_n^{-1}A_n}^2/\Vert\mathbf v\Vert_{A_n}^2\leq\zeta_1\),  and  $\zeta_2-$algebraic high-frequency if \(\Vert\mathbf v\Vert_{A_nD_n^{-1}A_n}^2/\Vert\mathbf v\Vert_{A_n}^2\geq\zeta_2\), where $D_n$ is the diagonal component of $A_n$ \cite{ruge1987algebraic,xu2017algebraic}. 

For any Toeplitz matrix $A_n$, we have \(\Vert \mathbf v\Vert_{A_nD_n^{-1}A_n}^2=\Vert \mathbf v\Vert_{A_n^2}^2/a_0\). We can show that
\[  \frac{1}{a_0}\Vert\mathbf v\Vert_{A_n^2}^2=\frac{1}{a_0}\Vert A_n^{1/2}(A_n^{1/2}\mathbf v)\Vert^2_2\in\left[ \frac{\lambda_n(A_n)}{a_0} \Vert \mathbf v\Vert^2_{A_n},\frac{\lambda_1(A_n)}{a_0} \Vert \mathbf v\Vert^2_{A_n}\right].\]
Thus, we need to choose $\zeta_1$ and $\zeta_2$ such that $\frac{\lambda_n(A_n)}{a_0}\leq\zeta_1\leq\zeta_2\leq\frac{\lambda_1(A_n)}{a_0}$. 
For simplicity,  in the following, we omit the explicit dependence of $\zeta_1$ (resp. $\zeta_2$) in $\zeta_1$-algebraic low frequency (resp. $\zeta_2$-algebraic high frequency) and simply refer to them as algebraic low frequency (resp. algebraic high frequency). 

{For some convolution-type integral equations, the algebraic high-frequency (resp. low-frequency) components of the coefficient matrices coincide with the geometric low-frequency (resp. high-frequency) components. In contrast, coefficient matrices of most PDE problems have the opposite spectral property. In Figure \ref{fig:eigenvectors} (top row),  we visualize the eigenvectors corresponding to the large and small eigenvalues of the coefficient matrix of an integral equation (\ref{eq:integral equation}) with $\mathcal K$ being a Gaussian smoothing kernel. We observe that the algebraic high-frequency (resp. low-frequency) components correspond to the geometric low-frequency (resp. high-frequency) components. For comparison, we also visualize the eigenvectors of the coefficient matrix of the Poisson equation in Figure \ref{fig:eigenvectors} (bottom row), where the algebraic high-frequency (resp. low-frequency) components correspond to the geometric high-frequency (resp. low-frequency) components.}

\begin{figure}
    \centering
    \includegraphics[width=.9\textwidth]{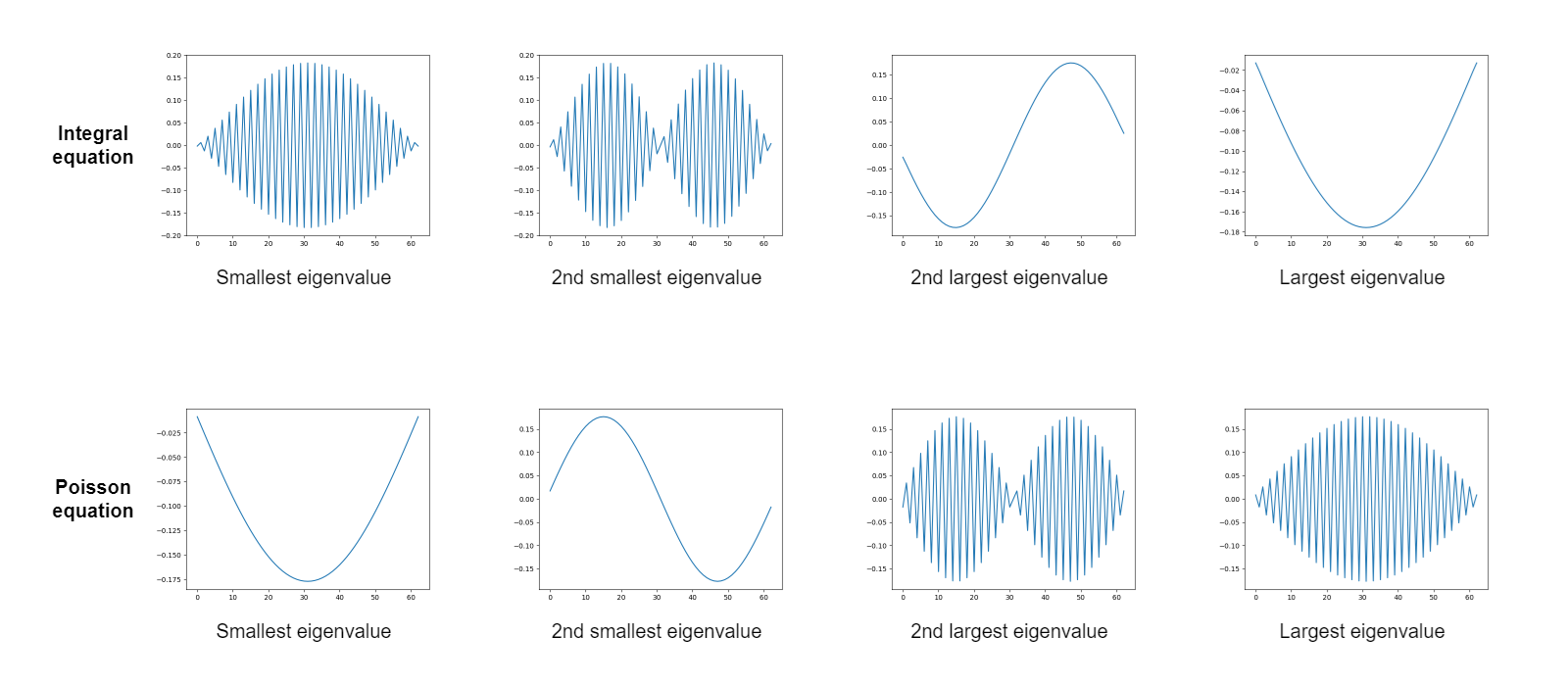}
    \caption{Eigenvectors corresponding to different eigenvalues of the linear system discretized from the integreal equation (top row) and the Poisson equation (bottom row).}
    \label{fig:eigenvectors}
\end{figure}

\subsection{Two-grid method} \label{sec:two-grid}
The algebraic multigrid method is a very efficient solver for solving linear systems. It consists of the smoothing steps and coarse-grid correcting steps. Here, we consider a simple two-grid method. 
The smoothing step is usually performed by some smoothers. We first define an iterative scheme for solving $A_n\mathbf x=\mathbf y$ 
\begin{equation}
    \mathbf x^{(\kappa+1)}=(I_n-M^{-1}_nA_n)\mathbf x^{(\kappa)}+M_n^{-1}\mathbf y,\quad \kappa=1,2,\dots, \label{eq:iterative_solver}
\end{equation}
where $\mathbf x^{(0)}$ is the initial vector, $I_n$ is the $n$-by-$n$ identity matrix, and $M_n$ is an $n$-by-$n$ matrix whose inverse can be evaluated efficiently. We further define $\mathcal{S}(A_n,\mathbf y,\mathbf x^{(0)},\kappa)$ as the procedure of solving $A_n\mathbf x=\mathbf y$ using $\kappa$ iterations of (\ref{eq:iterative_solver}) started from $\mathbf x^{(0)}$. Denote the iteration matrix as $S_n:=I_n-M_n^{-1}A_n$, then the error of $\mathbf x^{(\kappa)}$ can be written as
\begin{equation}
    \mathbf x^{(\kappa)}-\mathbf x=S_n(\mathbf x^{(\kappa-1)}-\mathbf x)=\dots=S_n^{\kappa}(\mathbf x^{(0)}-\mathbf x). \label{eq:iteration_matrix}
\end{equation}
Many classical iterative methods, like the damped Jacobi iteration, satisfy the smoothing condition \cite{xu2017algebraic,ruge1987algebraic}: for some $\sigma_1>0$ and any $\mathbf e\in\mathbb{R}^n$ 
\begin{equation}
    \Vert S_n\mathbf e\Vert_{A_n}^2\leq \Vert\mathbf e\Vert_{A_n}^2 - \sigma_1\Vert \mathbf e\Vert_{A_n^2}^2/a_0. \label{eq:smoothing_condition}
\end{equation}
From (\ref{eq:iteration_matrix}) and (\ref{eq:smoothing_condition}), if the error \(\mathbf x^{(\kappa-1)}-\mathbf x\) contains algebraic high-frequency modes, \textit{i.e.}, \(\Vert \mathbf x^{(\kappa-1)}-\mathbf x\Vert_{A_n}^2\) and \(\Vert \mathbf x^{(\kappa-1)}-\mathbf x\Vert_{A_n^2}^2/a_0\) are comparable in magnitude, after applying the smoothing operator \(S_n\), \(\Vert \mathbf x^{(\kappa)}-\mathbf x\Vert_{A_n}^2\) would be efficiently reduced. 

The correcting step for $A_n\mathbf x=\mathbf y$ is typically performed by a coarse grid correction:
$$ \hat{\mathbf x}=P_H^h(A_H)^{-1}R_h^H\mathbf y, $$
and the iteration matrix is $T_n:=I_n-P_H^h(A_H)^{-1}R_h^HA_n.$
Here, $P_H^h$ denotes the prolongation matrix that interpolates a vector on a coarse grid to a fine grid, $R_h^H$ denotes the restriction matrix that restricts a vector on a fine grid to a coarse grid, and $A_H:=R_h^HA_nP_H^h$ is the coarse grid coefficient matrix. 

The two-grid method solves the linear system $A_n\mathbf x=\mathbf y$ by alternatively applying the smoothing step and the correcting step (see Algorithm \ref{alg:two-grid}). By restricting a fine grid vector to a coarse grid, the geometric low-frequency modes are preserved. Therefore, solving the coarse grid problem can eliminate the geometric low-frequency modes efficiently. For most PDE problems, the geometric low-frequency modes are also algebraic low-frequency modes, so the coarse-grid correction step and the smoothing step are complementary to each other. The overall convergence of the two-grid iteration is guaranteed if for some $\sigma_2>0$ and any $\mathbf e\in\mathbb{R}^n$, \(T_n\) satisfies \cite{sun1997note}: 
\begin{equation}
    \Vert T_n\mathbf e\Vert_{A_n}^2\leq \frac{\sigma_2}{a_0}\Vert T_n\mathbf e\Vert_{A_n^2}^2 \label{eq:correcting_condition} \text{ and } \Vert T_n\mathbf e\Vert_{A_n}\leq\Vert\mathbf  e\Vert_{A_n}.
\end{equation} 
When $\sigma_2$ is sufficiently small, the first inequality implies that the error vectors after the correction step, \textit{i.e.}, $ T_n\mathbf e$, will not be algebraic low-frequency vectors (by the definition of algebraic low-frequency in Section \ref{sec:algebraic_frequency}). Therefore, it can be further solved efficiently by the smoothing step.
Then, by combining (\ref{eq:smoothing_condition}) and (\ref{eq:correcting_condition}), the two-grid method would converge in a factor of $1-\sigma_1/\sigma_2$:
\[\Vert S_nT_n\mathbf e\Vert_{A_n}^2\leq\Vert T_n\mathbf e\Vert_{A_n}^2-\sigma_1\Vert T_n\mathbf e\Vert_{A_n^2}^2/a_0\leq\Vert T_n\mathbf e\Vert_{A_n}^2-\frac{\sigma_1}{\sigma_2}\Vert T_n\mathbf e\Vert_{A_n}^2\leq(1-\frac{\sigma_1}{\sigma_2})\Vert\mathbf  e\Vert_{A_n}^2. \]

\begin{algorithm}
    \caption{Two-grid method}\label{alg:two-grid}
    \begin{algorithmic}
        \Require right-hand side vector $\mathbf y$ and the tolerance $\epsilon$
        \State $\tau=0$, and $\mathbf x_0=0$ 
        \While{$\Vert \mathbf y-A_n\mathbf x_\tau\Vert_2/\Vert \mathbf y\Vert_2\geq \epsilon$}
            \State $\hat{\mathbf x}_\tau=T_n\mathbf x_{\tau}+ P_H^h(A_H)^{-1}R_h^H\mathbf y$ \Comment{Correcting step}
            \State $\mathbf x_{\tau+1}=\mathcal{S}(A_n,\mathbf y,\hat{\mathbf x}_\tau,\kappa)$ \Comment{Smoothing step}
            \State $\tau=\tau+1$
        \EndWhile
    \State Return $\mathbf x_\tau$
    \end{algorithmic}
\end{algorithm}

When the algebraic low-frequency modes are not geometrically low-frequency, which is common for integral equations, the two-grid method may fail because the smoothing steps and the correcting steps both reduce the same high-frequency parts of the errors. Detailed experiments and analysis of the difficulties in solving integral equations using multigrid methods can be found in \cite{chan1997multigrid}. 

\subsection{Neural operators}
Neural operators are neural networks that can approximate mappings between function spaces. Once trained, a neural operator can approximate the solution of the learned mappings directly for various input conditions, which makes them particularly well-suited for solving linear or nonlinear systems \cite{lu2021learning,li2021fourier,wang2021learning,huang2022learning}.

For solving a linear system $A_n\mathbf x=\mathbf y$, where $\mathbf x$ and $\mathbf y$ are in $\mathbb{R}^n$, we need to construct a neural operator $\mathcal{N}_\theta:\mathbb{R}^n\rightarrow\mathbb{R}^n$ that can approximate $\mathbf x=A_n^{-1}\mathbf y$ by $\mathcal{N}_\theta(\mathbf y)$, where $\theta$ denotes the set of all learnable parameters in the network. Here, $\mathcal{N}_\theta$ can be various types of networks, like the fully connected networks, ResNet \cite{he2016deep}, U-Net \cite{ronneberger2015u}, and the FNO \cite{li2021fourier}. 
Typically, the training of neural operators can be data-driven or physics-informed. 

The data-driven method \cite{li2021fourier} directly minimizes the difference between the network prediction $\mathcal{N}_\theta(\mathbf y)$ and the true solution $\mathbf x=A_n^{-1}\mathbf y$: $\min_\theta \frac{1}{N}\sum_{i=1}^N\Vert\mathcal{N}_\theta(\mathbf y_i)-\mathbf x_i\Vert_2^2.$
Here $\{\mathbf x_i,\mathbf y_i=A_n\mathbf x_i\}_{i=1}^N$ is a randomly generated set of training samples. 

The physics-informed method \cite{goswami2022physics,li2024physics,wang2021learning} minimizes the residual of the problem: $\min_\theta \frac{1}{N}\sum_{i=1}^N\Vert A_n\mathcal{N}_\theta(\mathbf y_i)-\mathbf y_i\Vert_2^2.$
It is particularly useful when the true solution $\mathbf x_i=A_n^{-1}\mathbf y_i$ is not available. 

\section{A hybrid iterative method for integral equations}
As we mentioned in the last paragraph of Section \ref{sec:two-grid}, integral equations of convolution type are difficult to solve using multigrid methods. The main challenge is to find a suitable smoother to efficiently reduce algebraic low-frequency components, \textit{i.e.}, geometric high-frequency components. Motivated by the recent attempts to integrate neural operators into multigrid frameworks \cite{huang2022learning,zhang2022hybrid,Zhang2024BlendingNO}, we propose a novel framework combining classical smoothers and neural operators to solve convolution-type integral equations. The main idea is to design a new loss function for training neural operators so that they can solve the desired frequency components as expected. 

\subsection{Training of neural operators} \label{sect3.1}


Here, we consider a general neural network $\mathcal{N}_\theta:\mathbb{R}^n\rightarrow\mathbb{R}^n$ for approximating the solution operator $A_n^{-1}$ of the linear systems $A_n\mathbf x=\mathbf y$.  A general expected loss function for training $\mathcal{N}$ is:
\begin{equation}
\mathcal{L}_{\rm{exp}}(\theta):=\mathbb{E}_{\mathbf y\sim\mu(\mathbb{B}(\mathbb{R}^n))}(\Vert P_n(\mathcal{N}_\theta(\mathbf y)-\mathbf x)\Vert^2_2). \label{eq:expected_loss}
\end{equation}
Here, $\mathbb{B}(\mathbb{R}^n)$ is the unit sphere $\{\mathbf y\in\mathbb{R}^n| \Vert \mathbf y\Vert_2= 1\}$, $\mu$ is a probability distribution defined on $\mathbb{B}(\mathbb{R}^n)$, and $P_n$ is an $n\times n$ real matrix. Such a preconditioned loss function has been applied for physics-informed neural networks \cite{liu2024preconditioning} recently. To approximate $A_n^{-1}\mathbf y$ for $\Vert \mathbf y\Vert_2\neq 1$ using $\mathcal{N}_\theta$, we can first normalize $\mathbf y$ and then scale $\mathcal N_\theta(\mathbf y)$ by the same factor, \textit{i.e.}, $\mathcal{N}_\theta(\mathbf y/\Vert \mathbf y\Vert_2)\Vert \mathbf y\Vert_2$.  We suppose the matrix $P_n$ has a set of orthonormal eigenvectors $\{\mathbf u_i\}_{i=1}^n$ and the corresponding eigenvalues $\{\lambda_i(P_n)\}_{i=1}^n$. Then, the loss $\mathcal{L}_{\rm{exp}}$ can be written as
\begin{equation}
    \mathbb{E}_{\mathbf y}\left\Vert P_n\left(\sum_{i=1}^n(\mathbf e^\top \mathbf u_i)\mathbf u_i\right)\right\Vert^2_2=\mathbb{E}_{\mathbf y}\left\Vert \sum_{i=1}^n\lambda_i(P_n)(\mathbf e^\top \mathbf u_i)\mathbf u_i\right\Vert^2_2=\mathbb{E}_{\mathbf y}\sum_{i=1}^n\lambda_i(P_n)^2(\mathbf e^\top\mathbf  u_i)^2 \label{eqt6}
\end{equation}
where $\mathbf e:=\mathcal{N}(\mathbf y)-\mathbf x$  is the error vector and $\mathbf e^\top\mathbf  u_i$ represents the projection of  $\mathbf e$ on the unit eigenvector $\mathbf u_i$. From (\ref{eqt6}), we see that the loss function (\ref{eq:expected_loss}) is precisely equivalent to a weighted summation of $(\mathbf e^\top \mathbf u_i)^2$, $i=1,\dots,n$, and the weight for each term is the squared eigenvalues $\lambda_i^2(P_n)$. Therefore, the neural operator damps more on the algebraic high-frequency components associated with $P_n$ in the error $\mathbf e$ during the training process.

To train the neural operator to effectively solve algebraic low-frequency components associated with $A_n$, we therefore need to choose $P_n$ such that it has the opposite spectral properties compared with those of $A_n$, \textit{i.e.}, algebraic high-frequency components of $P_n$ are algebraically low-frequency components with respect to $A_n$. An ideal choice would be $P_n=A_n^{-1}$, since $A_n^{-1}$ has the same eigenvectors as $A_n$ and its eigenvalues are the reciprocals of $A_n$'s. However, calculating the inverse of $A_n$ directly is not practical as it is our original problem.  Here, we propose two practical choices for $P_n$:
\begin{enumerate}
    \item  Let $P_n=C(A_n)^{-1}$ where $C(A_n)$ denotes a circulant preconditioner of $A_n$ \cite{chan1988optimal}. For Toeplitz matrices, $C(A_n)^{-1}$ is a good approximation to $A_n^{-1}$, and it can be evaluated efficiently by the fast Fourier transform \cite{chan1996conjugate}. It has been proven that the eigenvalues of $C(A_n)^{-1}A_n$ are clustered around 1 \cite{chan1989spectrum}.
    \item Let $P_n=aI_n-A_n$ for some $a>0$. For the Toeplitz matrix $A_n$ with generating function $f(\theta)$, we can choose $a$ such that $a\geq \max_{[0,2\pi]}f(\theta)$, see (\ref{bounds}). In this case, an eigenvector of $A_n$ corresponding to the eigenvalue $\lambda_i$ is also an eigenvector of $P_n$ corresponding to the eigenvalue $a-\lambda_i>0$.
\end{enumerate}
Using either way to construct $P_n$, we will be able to train a neural operator that can solve the desired frequency components. We note that when $A_n$ is SPD, $P_n$ is also SPD.

\begin{remarks}
    Here, the $P_n$ does not have to be a good preconditioner in the sense of numerical linear algebra, \textit{i.e.}, we do not require $P_n^{-1}A_n\approx I_n$. Instead, we only require $P_n$ has a reverse spectrum with $A_n$, making our method applicable even to problems without good preconditioners.
\end{remarks}

We demonstrate a simple example here to further illustrate the effect of $P_n$ on neural operator training. We consider a discretized one-dimensional Poisson equation whose eigenvectors are shown in Figure \ref{fig:eigenvectors}.
We first construct a vector $\mathbf x_0=\{-\cos(4\pi t/64)+\cos(20\pi t/64)-\cos(60\pi t/64)\}_{t=0}^{64}$ and compute $\mathbf y_0=A_n\mathbf x_0$. This vector $\mathbf x_0$ consists of three different frequency components. Then, we initialize a neural operator $\mathcal{N}_\theta$ and train it by $ \min_\theta \Vert P_n(\mathcal{N}_\theta(\mathbf y_0)-\mathbf x_0)\Vert^2_2. $
We choose $P_n=A_n$ and $P_n=C(A_n)^{-1}$ respectively, and compare the predictions of $\mathcal{N}_\theta$. In Figures \ref{fig:soln_compare} and \ref{fig:err_freq_compare}, we plot the neural operator predictions and the magnitude spectra of the errors after different numbers of training epochs. When $P_n=A_n$,   $\mathcal{N}_\theta$ solves the algebraic high-frequency (the same as geometric high-frequency) part associated with $A_n$  effectively, and $\mathcal{N}_\theta$ performs like a smoother that reduces the geometric high-frequency errors. On the contrary, when $P_n=C(A_n)^{-1}$,  $\mathcal{N}_\theta$ solves the algebraic high-frequency (the same as geometric low-frequency) part associated with $C(A_n)^{-1}$  effectively, and  $\mathcal{N}_\theta$ performs like a corrector that reduces geometric low-frequency errors.  

\begin{figure}[ht]
    \centering
    \subfigure[$P_n=A_n$]{\includegraphics[width=0.45\textwidth]{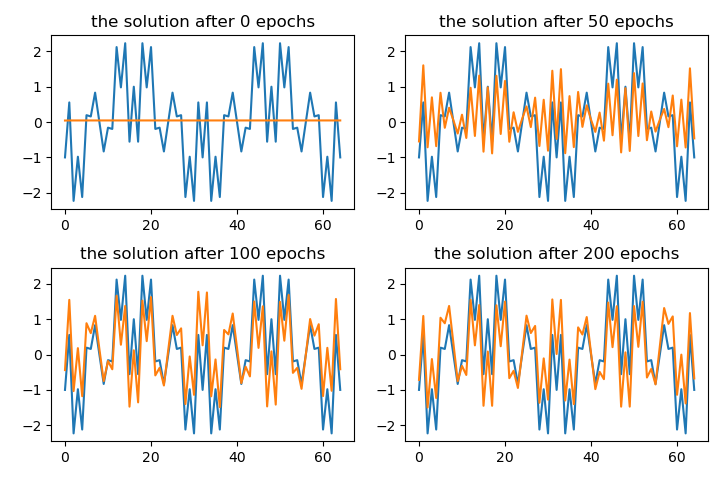}} 
    \subfigure[$P_n=C(A_n)^{-1}$]{\includegraphics[width=0.45\textwidth]{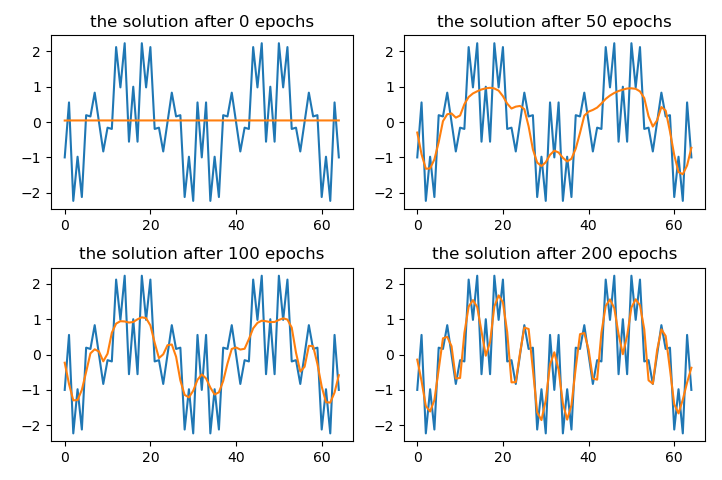}} 
    \caption{(a) and (b) shows the prediction $\mathcal{N}_\theta(\mathbf y_0)$ after different numbers of training epochs when $P_n=A_n$ and $P_n=C(A_n)^{-1}$ respectively. Here, the blue curves represent $\mathbf x_0$ and the orange curves represent $\mathcal{N}_\theta(\mathbf y_0)$.}
    \label{fig:soln_compare}
\end{figure}

\begin{figure}[ht]
    \centering
    \subfigure[$P_n=A_n$]{\includegraphics[width=0.45\textwidth]{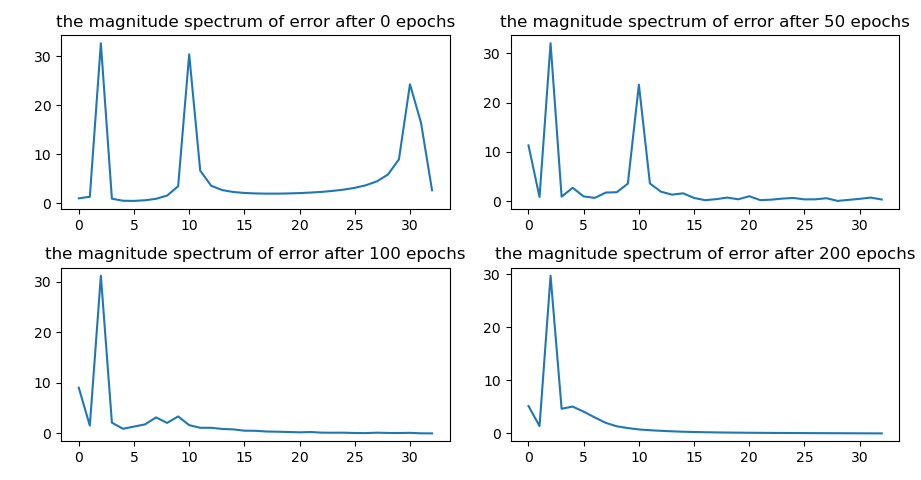}} 
    \subfigure[$P_n=C(A_n)^{-1}$]{\includegraphics[width=0.45\textwidth]{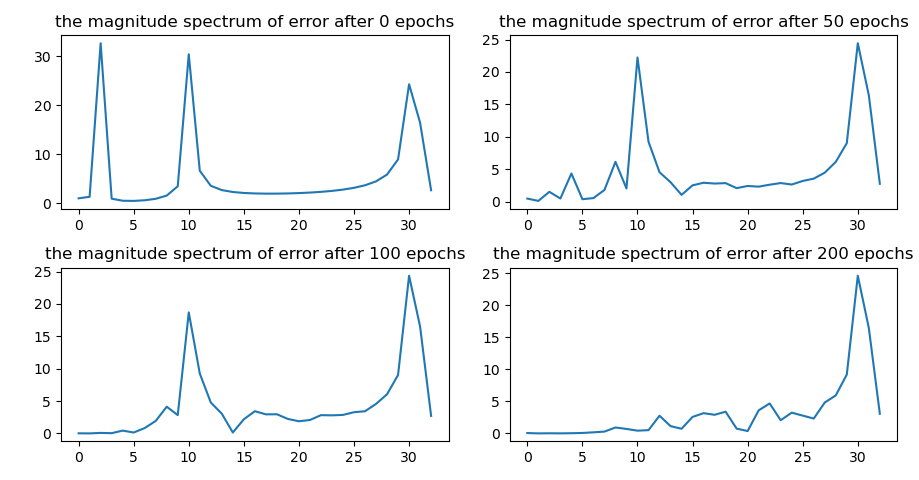}} 
    \caption{(a) and (b) show the Fourier magnitude spectra of the error after different numbers of training iterations when $P_n=A_n$ and $P_n=C(A_n)^{-1}$ respectively. The $x$-axis corresponds to the geometric frequency. }
    \label{fig:err_freq_compare}
\end{figure}

\subsection{The modified two-grid method}

With a well-trained neural operator solver, we can solve the linear system $A_n\mathbf x=\mathbf y$ by replacing the coarse-grid correction in the two-grid method (Algorithm \ref{alg:two-grid}) by a neural operator correction step. The resulting algorithm is shown in Algorithm \ref{alg:hybrid_iterative}. The overall structure of Algorithm \ref{alg:hybrid_iterative} is similar to the two-grid method. During each iteration, we first apply $\kappa_1$ steps of a classical smoother $\mathcal S$ for pre-smoothing. Then we use our trained neural networks developed in Section \ref{sect3.1} to correct the iterant $\mathbf x^{(\kappa+1)}$ in (\ref{eq:iterative_solver}). Finally, we use $\kappa_2$ steps of $\mathcal S$ for post-smoothing. In our algorithm, the smoothing steps are used to reduce algebraic high-frequency error, while the neural operator is used to reduce algebraic low-frequency error. 

We note that the HINTS method proposed in \cite{zhang2022hybrid,Zhang2024BlendingNO} also replaces the coarse-grid correction in the two-grid method by a neural operator solver DeepONet \cite{lu2021learning}, which tends to fit the geometric low-frequency in the target function first \cite{rahaman2019spectral}. This phenomenon is probably due to the multi-layer perceptron (MLP) module, which has been proven to favor geometric low-frequency \cite{cao2021towards}, in the structure of DeepONet. Thus, \cite{zhang2022hybrid,Zhang2024BlendingNO} utilize this spectral bias property and train DeepONets to learn geometric low-frequency. The method is good for PDE problems where the smoothers solve the algebraic high-frequency parts of the solution and the DeepONet solves the algebraic low-frequency parts of the solution (which is the same as the geometric low-frequency part), see Figure \ref{fig:eigenvectors}.

However, for integral equations, the geometric low-frequency part is also the algebraic high-frequency part, see Figure \ref{fig:eigenvectors}. Hence in the HINTS method, both the smoother and DeepONet will only solve the algebraic high-frequency part of the solution, and therefore the method will not be good for integral equations. Our neural operator in Section \ref{sect3.1} is designed to exactly recover the algebraic low-frequency part of the solution efficiently.

\begin{algorithm}
\caption{}\label{alg:hybrid_iterative}
\begin{algorithmic}
\Require right-hand side vector $\mathbf y$, number of pre-smoothing steps $v_1$, number of post smoothing steps $v_2$, and the tolerance $\epsilon>0$. 
\State Set $\tau=0$ and $\mathbf x_0=0$
\While{$\Vert \mathbf y-A_n \mathbf x_\tau\Vert_2/\Vert \mathbf y\Vert_2 \geq \epsilon$}
\State $\mathbf x_{\tau}^{(1)}=\mathcal{S}(A_n,\mathbf y,\mathbf x_\tau,\kappa_1)$  \Comment{Pre-smoothing steps}
\State $\mathbf r_\tau=(\mathbf y-A_n\mathbf  x_{\tau}^{(1)})/Z_\tau$ where $Z_\tau=\Vert \mathbf y-A_n \mathbf x_{\tau}^{(1)}\Vert_2$\Comment{Normalization step}
\State $\mathbf x_{\tau}^{(2)}=\mathcal{N}_\theta(\mathbf r_\tau)Z_\tau+\mathbf x_{\tau}^{(1)}$ \Comment{Neural operator correction step}
\State $\mathbf x_{\tau+1}=\mathcal{S}(A_n,\mathbf y,\mathbf x^{(2)}_\tau,\kappa_2)$ \Comment{Post-smoothing steps}
\State $\tau=\tau+1$
\EndWhile
\State Return $\mathbf x_\tau$
\end{algorithmic}
\end{algorithm}

\section{Analysis of proposed methods}
In this section, we give some theoretical analysis of our proposed methods. We first derive a generalization error estimate for the neural operator training (\ref{eq:expected_loss}). Then, we analyze the convergence of Algorithm \ref{alg:hybrid_iterative} under similar assumptions as the two-grid method \cite{ruge1987algebraic}. Finally, we study how the choice of $P_n$ in the loss function (\ref{eq:expected_loss}) would affect the convergence of different frequency components in the training error.

\subsection{Generalization errors analysis of neural operators}
We aim to analyze the generalization error when training the neural operator $\mathcal{N}_\theta$. The generalization error reflects how well the model can generalize to new, unseen data, rather than just memorizing the training data. In this section, we derive an analytical upper bound for the generalization error of $\mathcal{N}_\theta$ when $\mathcal{N}_\theta$ is defined as a two-layer ReLU network:
\begin{equation}    \mathcal{N}_\theta(\mathbf x):=\sum_{j=1}^2c_j\text{ReLU}(W_j\mathbf x),\label{eq:two_layer_network}
\end{equation}
where $c_j\in\mathbb{R}$, $W_j\in\mathbb{R}^{n\times n}$, $j=1,2$, and $\text{ReLU}(z)=\max\{z,0\}$.  {Two-layer networks are commonly used in the theoretical study of neural network methods \cite{lu2021priori,li2024priori,weinan2019priori} because of their simplicity.} We assume the parameter set $\theta$ is controlled by an $n$-dependent positive number $C_n$:
$$ \theta\in\Theta(C_n):=\left\{(c_1,c_2,W_1,W_2)\bigg| |c_j|\leq C_n, \Vert W_j\Vert_2\leq 1,j=1,2\right\}.  $$
Since $\text{ReLU}(cz)=c\text{ReLU}(z)$ for any $c>0$, there is no loss of generality of assuming $\Vert W_j\Vert_2\leq 1$. 

In practice, we are not able to precisely evaluate the expected loss $\mathcal{L}_{\rm{exp}}(\theta)$, so we approximate it with a discrete quadrature instead. Let $\{\mathbf  y_i\}_{i=1}^N$ be a set of right-hand-side vectors following a probability distribution $\mu$, which is defined on the unit sphere $\mathbb{B}(\mathbb{R}^n)$, and $\{\mathbf  x_i=A_n^{-1}\mathbf  y_i\}_{i=1}^N$ be the set of ground true solutions. Here, $N$ is the number of training samples. In practice, we first sample $\mathbf x_i$ from some random distribution and then compute $\mathbf y_i=A_n\mathbf x_i$.  Then, we train the network by minimizing the empirical loss function:
\begin{equation}
    \mathcal{L}^N_{\rm{emp}}(\theta):=\frac{1}{N}\sum_{i=1}^N\Vert P_n(\mathcal{N}_\theta(\mathbf y_i)-\mathbf x_i)\Vert_2^2. \label{eq:empirical_loss}
\end{equation} 
Here $\mathcal{L}^N_{\rm{emp}}(\theta)$ is a discrete quadrature approximation to $\mathcal{L}_{\rm{exp}}(\theta)$. 
We also define an error measurement for $\mathcal{N}_\theta$ as 
$$ \Vert \mathcal{N}_\theta-A_n^{-1}\Vert^2_{A_n,\mu}:=\mathbb{E}_{\mathbf y\sim\mu}\Vert\mathcal{N}_\theta(\mathbf y)-A_n^{-1}\mathbf y\Vert^2_{A_n}=\int_{\mathbf y\sim\mathbb{R}^d}\Vert\mathcal{N}_\theta(\mathbf y)-A^{-1}\mathbf y\Vert^2_{A_n}\mu(\mathbf y)d\mathbf y.$$
Then, the estimation of the generalization error $\Vert \mathcal{N}_\theta-A_n^{-1}\Vert^2_{A_n,\mu}$ is given below.
\begin{definition} \label{def:delta_minimizer}
    A vector $\mathbf x^*$ is a {\bf $\delta$-minimizer} of a function $f(\mathbf x)$ if $f(\mathbf x^*)\leq \delta+\inf_{\mathbf x}f(\mathbf x)$.
\end{definition}
In Definition \ref{def:delta_minimizer}, $\delta$ bounds the deviation between $f(\mathbf x^*)$ and the global minimum, so $\delta$ is a measure of the optimization error.
\begin{theorem} \label{theorem:generalization_error}
    Let $\{\mathbf y_i\}_{i=1}^N$ be a set of training data randomly sampled from a distribution defined on the unit ball $\mu(\mathbb{B}(\mathbb{R}^n))$, $\mathcal{N}_\theta$ is a two-layer network defined as (\ref{eq:two_layer_network}), and $\theta^*$ is a $\delta$-minimizer of the empirical loss:
    $$ \underset{\theta\in\Theta(C_n)}{\min} \mathcal{L}^N_{\rm{emp}}(\theta) $$
    where $C_n\geq\Vert A_n^{-1}\Vert_2$. 
    Then, the following inequality holds with probability at least $1-\epsilon$:
    $$ \Vert \mathcal{N}_{\theta^*}-A_n^{-1}\Vert^2_{A_n,\mu}\leq\Vert A_nP_n^{-2}\Vert_2\left( \frac{108nC_n^2\Vert P_n\Vert_{F}^2}{\sqrt{N}}+9C_n^2\Vert P_n\Vert_2^2\sqrt{\frac{2\log(2/\epsilon)}{N}}+\delta\right),$$
    where $\Vert\cdot\Vert_{F}$ denotes the matrix Frobenius norm.
\end{theorem}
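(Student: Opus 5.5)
The argument has three stages: convert the $A_n$-weighted error into the expected loss, bound the expected loss by the empirical loss plus a uniform deviation term, and control that deviation with Rademacher complexity.

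\emph{Step 1 (norm comparison and realizability).} For any $\mathbf e$, write $\mathbf e=P_n^{-1}(P_n\mathbf e)$. Then
\[
\Vert\mathbf e\Vert_{A_n}^2=(P_n\mathbf e)^\top P_n^{-1}A_nP_n^{-1}(P_n\mathbf e)\leq\Vert A_nP_n^{-2}\Vert_2\Vert P_n\mathbf e\Vert_2^2,
\]
using that $P_n$ is SPD and commutes with $A_n$ in both of our constructions. Hence $\Vert\mathcal N_\theta-A_n^{-1}\Vert_{A_n,\mu}^2\leq\Vert A_nP_n^{-2}\Vert_2\,\mathcal L_{\rm exp}(\theta)$.

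Next I show that the exact solution operator is representable in $\Theta(C_n)$. Since $A_n^{-1}\mathbf y=\mathrm{ReLU}(A_n^{-1}\mathbf y)-\mathrm{ReLU}(-A_n^{-1}\mathbf y)$, take $W_1=A_n^{-1}/\Vert A_n^{-1}\Vert_2$, $W_2=-W_1$, $c_1=\Vert A_n^{-1}\Vert_2$ and $c_2=-c_1$. This is admissible because $C_n\geq\Vert A_n^{-1}\Vert_2$. Therefore $\inf_\theta\mathcal L^N_{\rm emp}=\inf_\theta\mathcal L_{\rm exp}=0$, and the $\delta$-minimizer satisfies $\mathcal L^N_{\rm emp}(\theta^*)\leq\delta$. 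It remains to bound
\[
\mathcal L_{\rm exp}(\theta^*)\leq\mathcal L^N_{\rm emp}(\theta^*)+\sup_{\theta\in\Theta(C_n)}\bigl(\mathcal L_{\rm exp}(\theta)-\mathcal L^N_{\rm emp}(\theta)\bigr).
\]

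\emph{Step 2 (concentration).} Let $\ell_\theta(\mathbf y)=\Vert P_n(\mathcal N_\theta(\mathbf y)-A_n^{-1}\mathbf y)\Vert_2^2$. For $\Vert\mathbf y\Vert_2=1$ we have $\Vert\mathcal N_\theta(\mathbf y)\Vert_2\leq2C_n$ and $\Vert A_n^{-1}\mathbf y\Vert_2\leq C_n$, so $0\leq\ell_\theta\leq9C_n^2\Vert P_n\Vert_2^2$. The standard symmetrization bound plus McDiarmid gives, with probability at least $1-\epsilon$,
\[
\sup_\theta(\mathcal L_{\rm exp}-\mathcal L^N_{\rm emp})\leq2\,\mathcal R_N(\{\ell_\theta\})+9C_n^2\Vert P_n\Vert_2^2\sqrt{2\log(2/\epsilon)/N},
\]
which is exactly the second term in the statement.

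\emph{Step 3 (Rademacher complexity; the main obstacle).} Expand the loss as $\ell_\theta=\sum_{k=1}^n\bigl(\mathbf p_k^\top(\mathcal N_\theta(\mathbf y)-A_n^{-1}\mathbf y)\bigr)^2$, where $\mathbf p_k$ are the rows of $P_n$, and treat each coordinate separately. The scalar map $t\mapsto t^2$ is $6C_n\Vert\mathbf p_k\Vert_2$-Lipschitz on the relevant range $|t|\leq3C_n\Vert\mathbf p_k\Vert_2$, so Talagrand's contraction lemma removes the square. Within each coordinate, the fixed term $\mathbf p_k^\top A_n^{-1}\mathbf y$ contributes zero Rademacher complexity. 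The network term is $\sum_j c_j\sum_m p_{km}\mathrm{ReLU}(\mathbf w_{j,m}^\top\mathbf y)$. Bound it by peeling $|c_j|\leq C_n$, applying contraction to the ReLU, and using the linear-class bound
\[
\mathbb E\sup_{\Vert\mathbf w\Vert\leq1}\frac1N\sum_i\sigma_i\mathbf w^\top\mathbf y_i\leq\frac1{\sqrt N},
\]
valid since $\Vert\mathbf y_i\Vert_2=1$. Summing over $m$ with weights $|p_{km}|$ gives roughly $\sqrt n\,\Vert\mathbf p_k\Vert_2$ by Cauchy--Schwarz, and summing over $j$ and $k$ gives $\sum_k\Vert\mathbf p_k\Vert_2^2=\Vert P_n\Vert_F^2$. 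This produces a bound of the form $c\,nC_n^2\Vert P_n\Vert_F^2/\sqrt N$.

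The delicate part is the constant bookkeeping needed to reach $108$. The factor $3C_n$ from the range bound, the Lipschitz factor $6$, the factor $2$ from symmetrization, the two hidden units, and the coordinatewise sums must combine to $108n$. Apart from the constant, the main risk is that the vector-valued contraction step needs the coordinatewise decomposition above instead of a direct application to the vector-valued network. I would first try that decomposition, and fall back on Maurer's vector contraction inequality if the direct route loses more than a factor $\sqrt n$.

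Combining Steps 1–3 yields the stated bound.
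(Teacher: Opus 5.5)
Your proposal is correct. Steps 1 and 2 coincide with the paper's argument: the same realizing parameter $\hat\theta$ with $c_1=-c_2=\Vert A_n^{-1}\Vert_2$, the same range bound $9C_n^2\Vert P_n\Vert_2^2$, and the same symmetrization-plus-concentration lemma. Step 3, however, bounds $R_N(\mathcal H)$ by a genuinely different route.

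\textbf{How the paper does Step 3.} It expands $\Vert P_n\mathbf e\Vert_2^2=\sum_j\sum_{k,l}P_n[j,k]P_n[j,l]\,\mathbf e[k]\mathbf e[l]$ into $n^3$ cross products. It then places each coordinate $\mathbf e[k]$ in a class $\mathcal G$ in which the linear part $\mathbf w_3^\top\mathbf y$ is also allowed to vary. The product class $\mathcal G\times\mathcal G$ is handled by the polarization identity $f_1f_2=\frac12(f_1+f_2)^2-\frac12f_1^2-\frac12f_2^2$, giving $R_N(\mathcal G\times\mathcal G)\le 18C_nR_N(\mathcal G)$ with $R_N(\mathcal G)\le 3C_n/\sqrt N$. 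Finally it pays $\sum_{j,k,l}|P_n[j,k]P_n[j,l]|\le n\Vert P_n\Vert_F^2$. This is exactly where $108n=2\cdot n\cdot 18\cdot 3$ comes from.

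\textbf{How your route differs.} You keep only the $n$ squares $(\mathbf p_k^\top\mathbf e)^2$. You contract $t\mapsto t^2$ with the row-adapted constant $6C_n\Vert\mathbf p_k\Vert_2$, and you drop the fixed term $\mathbf p_k^\top A_n^{-1}\mathbf y$ exactly. Carried through, this gives
$R_N(\mathcal H)\le\sum_k 6C_n\Vert\mathbf p_k\Vert_2\cdot 4C_n\Vert\mathbf p_k\Vert_1/\sqrt N\le 24\sqrt n\,C_n^2\Vert P_n\Vert_F^2/\sqrt N$.
The factor $4$ is $2$ hidden units times a factor $2$ for the absolute value that appears when you take the supremum over $c_j\in[-C_n,C_n]$. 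That factor is harmless: $\mathbf w=0$ makes both one-sided suprema nonnegative, and the Rademacher sign vector has the same law as its negative. The paper's ReLU lemma passes over this point. Hence $2R_N(\mathcal H)\le 48\sqrt n\,C_n^2\Vert P_n\Vert_F^2/\sqrt N$, which implies the stated $108n$ term.

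\textbf{Consequences for your write-up.} The bookkeeping you call delicate has ample slack, so you should finish the arithmetic rather than leave it as ``$c\,n$''. No vector-valued (Maurer) contraction is needed: after the row split and the triangle inequality over $m$, every contraction is scalar. The paper's route buys a reusable product rule for bounded classes; yours is shorter and sharper by a factor $\sqrt n$.

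\textbf{A correction to Step 1.} Your justification there should be replaced. $C(A_n)^{-1}$ does not commute with a general Toeplitz $A_n$, so commutation is not available for that choice of $P_n$. It is also not needed. For any symmetric invertible $P_n$, the matrix $P_n^{-1}A_nP_n^{-1}=P_n^{-1}(A_nP_n^{-2})P_n$ is symmetric positive semidefinite and similar to $A_nP_n^{-2}$. Therefore $\Vert P_n^{-1}A_nP_n^{-1}\Vert_2=\rho(A_nP_n^{-2})\le\Vert A_nP_n^{-2}\Vert_2$. The paper uses this inequality without comment.
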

The proof of Theorem \ref{theorem:generalization_error} is given in Appendix \ref{sec:generalization_error_proof}.
This theorem implies that, for any fixed $n$, with high probability the generalization error will converge to 0 as $N$ goes to infinity and $\delta$ goes to 0. Here, $\delta$ corresponds to the optimization error when minimizing (\ref{eq:empirical_loss}).

\subsection{Convergence analysis of Algorithm \ref{alg:hybrid_iterative}}
The convergence of conventional two-grid methods has been well studied in the literature \cite{ruge1987algebraic,sun1997note} based on reasonable assumptions on the smoothing step and the correcting step. We aim to conduct a similar analysis for our proposed hybrid iterative methods under similar assumptions.
We focus on a simple case of Algorithm \ref{alg:hybrid_iterative} when $\kappa_1=0$ and $\kappa_2=1$, \textit{i.e.}, only one post-smoothing step and no pre-smoothing step. We can prove the following estimate by making reasonable assumptions on the neural operators $\mathcal{N}_\theta$ similar to assumptions made for coarse-grid correction in \cite{ruge1987algebraic}.

\begin{theorem}\label{theorem:error_estimate}
    Suppose $\mathbf y\neq 0$ be a vector in $\mathbb{R}^n$, $\mathcal{N}_\theta$ be a neural network solver, and the iterative solver $\mathcal{S}$ (defined as (\ref{eq:iterative_solver}) with iteration matrix $S_n$) satisfies the smoothing condition (\ref{eq:smoothing_condition}) for some $\delta_1>0$. Let $\mathbf x_\tau$, $\tau=0,1,\dots$, be a sequence generated by Algorithm \ref{alg:hybrid_iterative} with $\kappa_1=0$ and $\kappa_2=1$. 
    Then, the following inequality holds
    $$ \Vert \mathbf x_{\tau+1}-A_n^{-1}\mathbf y\Vert_{A_n}^2\leq \left(1-\frac{\delta_1}{\delta_2}\right)\delta_3\Vert \mathbf x_{\tau}-A_n^{-1}\mathbf y\Vert_{A_n}^2 $$
    if $\mathcal{N}_\theta$ satisfies:
    \begin{enumerate}
        \item correcting condition: $\Vert \mathcal{N}_\theta(\mathbf r)-A_n^{-1}\mathbf r\Vert^2_{A_n}\leq \frac{\delta_2}{a_0}\Vert \mathcal{N}_\theta(\mathbf r)-A_n^{-1}\mathbf r\Vert^2_{A^2_n}$ for some $\delta_2>0$.
        \item stability condition: $\Vert \mathcal{N}_\theta(\mathbf r)-A^{-1}_n\mathbf r\Vert^2_{A_n}\leq \delta_3\Vert A^{-1}_n\mathbf r\Vert^2_{A_n}$ for some $\delta_3>0$.
    \end{enumerate}
    Here, $\mathbf r=\frac{\mathbf y-A_n\mathbf x_\tau}{\Vert \mathbf y-A_n\mathbf x_\tau\Vert_2}$.
\end{theorem}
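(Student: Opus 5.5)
The plan is to mimic the two-grid argument of Section \ref{sec:two-grid}, with the neural operator correction in place of $T_n$. Write $\mathbf x=A_n^{-1}\mathbf y$ and $\mathbf e_\tau=\mathbf x_\tau-\mathbf x$, and let $Z_\tau=\Vert\mathbf y-A_n\mathbf x_\tau\Vert_2>0$. Since $\kappa_1=0$ we have $\mathbf x_\tau^{(1)}=\mathbf x_\tau$ and $\mathbf r=\mathbf r_\tau=-A_n\mathbf e_\tau/Z_\tau$. If $Z_\tau=0$ the algorithm has already terminated, so we may assume it is positive.

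The first step is to express the error after the correction in terms of the operator error at the normalized residual. Since $A_n^{-1}\mathbf r=-\mathbf e_\tau/Z_\tau$, we get
\[
\mathbf x_\tau^{(2)}-\mathbf x=Z_\tau\mathcal N_\theta(\mathbf r)+\mathbf e_\tau=Z_\tau\bigl(\mathcal N_\theta(\mathbf r)-A_n^{-1}\mathbf r\bigr)=:Z_\tau\mathbf d .
\]
This step is the crux, and it is why the normalization in Algorithm \ref{alg:hybrid_iterative} matters. Both conditions on $\mathcal N_\theta$ are stated for the normalized $\mathbf r$, and because they are homogeneous of degree two in the vector, they transfer to $\hat{\mathbf e}:=Z_\tau\mathbf d$ by multiplying through by $Z_\tau^2$. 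This gives
\[
\Vert\hat{\mathbf e}\Vert_{A_n}^2\le\frac{\delta_2}{a_0}\Vert\hat{\mathbf e}\Vert_{A_n^2}^2
\quad\text{and}\quad
\Vert\hat{\mathbf e}\Vert_{A_n}^2\le\delta_3 Z_\tau^2\Vert A_n^{-1}\mathbf r\Vert_{A_n}^2=\delta_3\Vert\mathbf e_\tau\Vert_{A_n}^2 .
\]

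Next comes the single post-smoothing step. By (\ref{eq:iteration_matrix}), $\mathbf e_{\tau+1}=S_n\hat{\mathbf e}$. Applying the smoothing condition (\ref{eq:smoothing_condition}) with $\sigma_1=\delta_1$ and then the correcting condition yields
\[
\Vert\mathbf e_{\tau+1}\Vert_{A_n}^2\le\Vert\hat{\mathbf e}\Vert_{A_n}^2-\frac{\delta_1}{a_0}\Vert\hat{\mathbf e}\Vert_{A_n^2}^2\le\Bigl(1-\frac{\delta_1}{\delta_2}\Bigr)\Vert\hat{\mathbf e}\Vert_{A_n}^2 .
\]
The stability bound then completes the estimate $\Vert\mathbf e_{\tau+1}\Vert_{A_n}^2\le(1-\delta_1/\delta_2)\delta_3\Vert\mathbf e_\tau\Vert_{A_n}^2$.

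The only point needing care is the sign of $1-\delta_1/\delta_2$ in the last step. That step multiplies the stability bound by this factor, so it requires $1-\delta_1/\delta_2\ge0$. This holds automatically: the left side of the smoothing inequality is nonnegative, so whenever $\hat{\mathbf e}\neq0$ the chain above forces $1-\delta_1/\delta_2\ge0$. If $\hat{\mathbf e}=0$, then $\mathbf e_{\tau+1}=0$ and the claim is trivial.
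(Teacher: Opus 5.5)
Your proposal is correct and follows essentially the same argument as the paper: write the post-correction error as $Z_\tau(\mathcal{N}_\theta(\mathbf r)-A_n^{-1}\mathbf r)$, then apply the smoothing condition, the correcting condition and the stability condition in that order, using degree-two homogeneity to pass between $\mathbf r$ and the unnormalized error. Your extra check that $1-\delta_1/\delta_2\geq 0$ is needed to multiply the stability bound by this factor; the paper uses it without comment, so including it is a worthwhile addition.
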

The proof is given in Appendix \ref{sec:proof_error_estimate}.

Consequently, the iteration of Algorithm 2 (with $\kappa_1=0$ and $\kappa_2=1$) would converge if $(1-\delta_1/\delta_2)\delta_3<1$. Generally, it would be difficult to guarantee that the correcting condition and stability condition stated in Theorem \ref{theorem:error_estimate} hold all the time.
Using the Markov's inequality \cite{lin2010probability} and Theorem \ref{theorem:generalization_error}, we can show the stability condition would hold with high probability for well-trained $\mathcal{N}_\theta$.

\begin{corollary} \label{cor_stability}
    Given any confidence level $\epsilon>0$ and $\mathbf r\sim\mu(\mathbb{B}(\mathbb{R}^n))$. Let $\mathcal{N}_{\theta}$ be a two-layer network, and $\theta^*$ is a $\delta$-minimizer of the empirical loss:
    $$ \underset{\theta\in\Theta(C_n)}{\min} \mathcal{L}^N_{\rm{emp}}(\theta) $$
    where $C_n\geq\Vert A_n^{-1}\Vert_2$. 
    Then, the stability condition
    $$ \Vert \mathcal{N}_{\theta^*}(\mathbf r)-A_n^{-1}\mathbf r \Vert^2_{A_n}\leq \delta_3\Vert A_n^{-1}\mathbf r \Vert^2_{A_n} $$
    holds with probability at least $1-\epsilon$ if $N$ is sufficiently large and $\delta$ is sufficiently small.
\end{corollary}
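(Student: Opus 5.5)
The plan is to combine Markov's inequality with Theorem \ref{theorem:generalization_error}, applied to the nonnegative random variable
\[
X(\mathbf r):=\Vert \mathcal{N}_{\theta^*}(\mathbf r)-A_n^{-1}\mathbf r\Vert^2_{A_n},\qquad \mathbf r\sim\mu .
\]
Its expectation is exactly $\Vert \mathcal{N}_{\theta^*}-A_n^{-1}\Vert^2_{A_n,\mu}$. To turn a bound on $X$ into the stability condition, I need a lower bound on the right-hand side that is uniform in $\mathbf r$. Since $\Vert\mathbf r\Vert_2=1$,
\[
\Vert A_n^{-1}\mathbf r\Vert^2_{A_n}=\mathbf r^\top A_n^{-1}\mathbf r\geq \lambda_1(A_n)^{-1}=:\beta_n>0 .
\]
It therefore suffices to show that $X(\mathbf r)\leq \delta_3\beta_n$ holds with probability at least $1-\epsilon$.

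The probability budget is split into two halves, one for the training sample and one for $\mathbf r$.

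\begin{itemize}
\item \textbf{Training sample.} Apply Theorem \ref{theorem:generalization_error} with confidence $\epsilon/2$. With probability at least $1-\epsilon/2$ over the training data,
\[
\mathbb{E}_{\mathbf r}X\leq B(N,\delta):=\Vert A_nP_n^{-2}\Vert_2\left(\frac{108nC_n^2\Vert P_n\Vert_F^2}{\sqrt N}+9C_n^2\Vert P_n\Vert_2^2\sqrt{\frac{2\log(4/\epsilon)}{N}}+\delta\right).
\]
\item \textbf{The random $\mathbf r$.} Conditional on that event, and since $\mathbf r$ is drawn independently of the training data, Markov's inequality gives
\[
\mathbb P\bigl(X(\mathbf r)>\delta_3\beta_n\bigr)\leq \frac{B(N,\delta)}{\delta_3\beta_n}.
\]
\end{itemize}

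The remaining step is to choose $N$ large and $\delta$ small, with $n$, $C_n$, $P_n$, $\epsilon$, $\delta_3$ fixed, so that $B(N,\delta)\leq \epsilon\,\delta_3\beta_n/2$. This is possible because every term of $B$ tends to zero as $N\to\infty$ and $\delta\to 0$. A union bound over the two failure events then gives the stability condition with probability at least $1-\epsilon$ (joint over training data and $\mathbf r$).

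The main obstacle is not analytic. It lies in interpreting the probability statement correctly: the training randomness and the randomness of $\mathbf r$ must be combined, and $\mathbf r$ must be treated as independent of the training set. This is why I split $\epsilon$ between the two sources. If the intended meaning is conditional on a fixed trained network, the Markov step alone, with the expectation bounded deterministically, suffices.

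Note also that in Algorithm \ref{alg:hybrid_iterative} the vector $\mathbf r$ is a normalized residual, not genuinely $\mu$-distributed. The corollary sidesteps this by assuming $\mathbf r\sim\mu$, so I would just state that assumption explicitly.
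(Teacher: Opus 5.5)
Your proposal is correct and follows essentially the same route as the paper: the uniform lower bound $\Vert A_n^{-1}\mathbf r\Vert_{A_n}^2\geq 1/\Vert A_n\Vert_2$, then Theorem~\ref{theorem:generalization_error} at confidence $\epsilon/2$ (hence the $\log(4/\epsilon)$ term), then Markov's inequality in $\mathbf r$, then choosing $N$ large and $\delta$ small so the Markov bound is at most $\epsilon/2$, and finally a union (Bonferroni) bound to reach $1-\epsilon$. You state explicitly that you condition on the training event and that $\mathbf r$ is independent of the training set, which makes the combination of the two sources of randomness cleaner than in the paper's write-up, but the argument is the same.
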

The proof is given in Appendix \ref{sec:proof_stability}.
For the correcting condition in Theorem \ref{theorem:error_estimate}, we will verify it numerically for some convolution-type integral equations in Section \ref{sec:exp}.

\subsection{The effect of $P_n$ on the training dynamics}
In this part, we would like to study how the choice of $P_n$ in the training loss (\ref{eq:expected_loss}) would affect the training dynamics of our neural operators. More specifically, we study how the convergence rate of different algebraic frequency components in training error depends on the choice of $P_n$.

Let $\{\mathbf v_i\}_{i=1}^n$ be a set of orthonormal eigenvectors of $A_n$ with corresponding eigenvalues $\{\lambda_i\}_{i=1}^n$ and $\lambda_i$s are sorted in descending order: $\lambda_1\geq\lambda_2\geq\dots\lambda_n>0$. Then, for each $\mathbf y_j$, the projection of the prediction error $\mathcal{N}_\theta(\mathbf y_j)-\mathbf x_j$ on to each $\mathbf v_i$ is
\[ f_i^j(\theta):=(\mathcal{N}_\theta(\mathbf y_j)-\mathbf x_j)^\top \mathbf v_i.\]
Here, $f^j_i$ indicates the $i$-th algebraic frequency component of the prediction error on the $j$-th sample. We also defined the $i$-th algebraic frequency components in prediction errors over the entire training set by $F_i(\theta):=\frac{1}{N}\sum_{j=1}^N(f^j_i)^2$. In the previous section, we proposed that $P_n$ can be defined as $aI_n-A_n$ for some $a>\lambda_1$. Next, we will study how $F_i(\theta)$ would be affected by $a$.

Since the error $\mathcal N_\theta(\mathbf y_j)-\mathbf x_j$ can be decomposed by $\sum_{i=1}^n f_i^j(\theta)\mathbf v_i$. Then, the loss function $\mathcal L^N_{\text{emp}}(\theta)$ can be written as
\begin{align*}
\mathcal L^N_{\text{emp}}(\theta)&=\frac{1}{N}\sum_{j=1}^N\Vert P_n(\mathcal N_\theta(\mathbf y_j)-\mathbf x_j)\Vert_2^2\\
&=\frac{1}{N}\sum_{j=1}^N\left\Vert (aI_n-A_n)\left(\sum_{i=1}^n f_i^j(\theta)\mathbf v_i\right)\right\Vert_2^2\\
&=\frac{1}{N}\sum_{j=1}^N\left\Vert \sum_{i=1}^n f_i^j(\theta)(a-\lambda_i)\mathbf v_i\right\Vert_2^2\\
&=\frac{1}{N}\sum_{j=1}^N\sum_{i=1}^n f_i^j(\theta)^2(a-\lambda_i)^2\\
&=\sum_{i=1}^n(a-\lambda_i)^2\frac{1}{N}\sum_{j=1}^Nf_i^j(\theta)^2\\
&=\sum_{i=1}^n(a-\lambda_i)^2F_i(\theta).
\end{align*}
Consequently, for each $F_i(\theta)$, we have an upper bound $F_i(\theta)\leq\frac{\mathcal L^N_{\text{emp}}(\theta)}{(a-\lambda_i)^2}$ during training. We can compare the upper bound of $F_1$ and $F_n$
\[\frac{\mathcal L^N_{\text{emp}}(\theta)/(a-\lambda_1)^2}{\mathcal L^N_{\text{emp}}(\theta)/(a-\lambda_n)^2}=\frac{(a-\lambda_n)^2}{(a-\lambda_1)^2}.\]
When $a\rightarrow\lambda_1^+$, the ratio $\frac{(a-\lambda_n)^2}{(a-\lambda_1)^2}$ converges to $+\infty$, which implies that the algebraic low-frequency components in the training error have significantly lower bounds than the algebraic high-frequency components. Consequently, we expect that algebraic low-frequency components would decay much faster than algebraic high-frequency components during training.

We will further verify it by the following numerical examples. We train a two-layer fully-connected network by minimizing the empirical loss $\mathcal{L}_{\rm{emp}}^N$ using the gradient descent method, where the coefficient matrix $A_n$ is discretized from a one-dimensional integral equation (the construction of $A_n$ is the same as (\ref{eq:discretized_system_tik_1d}) in Section \ref{sec:exp_1d_tik}) The step size is of order $O(a^{-1})$. Here, we use $n=32$ and the maximum eigenvalue of $A_n$ is smaller than but close to $1.1$. Then, we visualize the change of $F_i(\theta^{\tau})$ during the first 50 time steps in Figure \ref{fig:training_dynamics} for $a=1.1$ and $a=1.5$. In Figure \ref{fig:training_dynamics}, $F_1$, $F_{n/2}$, and $F_n$ represent the algebraic high, medium, and low frequency components in the training error. When $a=1.1$, we observe that $F_n$ and $F_{n/2}$ decay fast while $F_1$ remains almost unchanged, which implies that $\mathcal N_\theta$ is focusing on solving the algebraic low frequency rather than the high frequency. When $a=1.5$, the convergence rates of different frequency components become comparable, indicating that $\mathcal N_\theta$ does not focus on any part of frequency specifically.

\begin{figure}[ht]
    \centering
    \subfigure[$a=1.1$]{\includegraphics[width=0.4\textwidth]{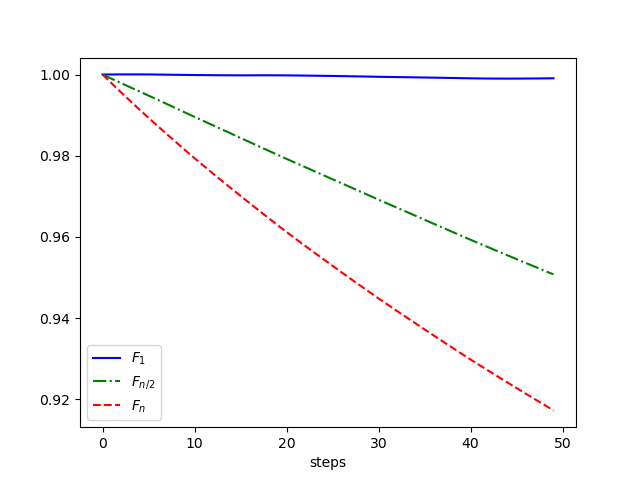}} 
    \subfigure[$a=1.5$]{\includegraphics[width=0.4\textwidth]{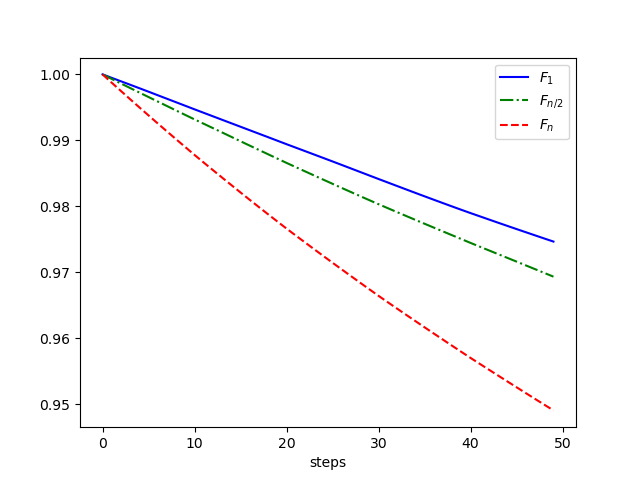}} 
    \caption{Figures of $F_i(\theta^\tau)/F_i(\theta^0)$, $\tau=0,1,\dots$, during training for different choices of $a$. }
    \label{fig:training_dynamics}
\end{figure}

\section{Numerical experiments} \label{sec:exp}
\subsection{Training setting}
The training of all neural operators in this section is conducted on the NVIDIA RTX A6000 GPU with 48 GB of memory. 
For one-dimensional problems, we train the neural operators for 1,000 epochs with a learning rate of $10^{-4}$ and a batch size of 100. For two-dimensional problems, we train for 2,000 epochs with a learning rate of $10^{-4}$ and a batch size of 10. Details of the neural operator structures are given in Appendix \ref{sec:network_structure}. In all experiments, we set $\kappa_2=0$ in Algorithm \ref{alg:hybrid_iterative}, {\it i.e.}, we only do the pre-smoothing step. In each batch, we randomly generate training samples $\mathbf x$ from a uniform distribution between $[0,1]$ (using the numpy.random.rand function) and compute the right-hand side vector $\mathbf y=A_n\mathbf x$. To improve the long-term stability of our methods, we apply a strategy similar to \cite{huang2022learning} during training. Specifically, after generating random training sample pairs $(\mathbf x,\mathbf y)$, we apply $\hat\kappa$ iterations of Algorithm \ref{alg:hybrid_iterative} with the latest updated $\mathcal N_\theta$. Then, we calculate the remaining error $\mathbf e$ and the residual $\mathbf r=A_n \mathbf e$. The pair $(\mathbf r,\mathbf e)$ is further used as training data. We visualize the training process in Figure \ref{fig:training}. The number $\hat\kappa$ is randomly chosen from 1 to 10.

\begin{figure}[h]
    \centering
    \includegraphics[width=0.7\textwidth]{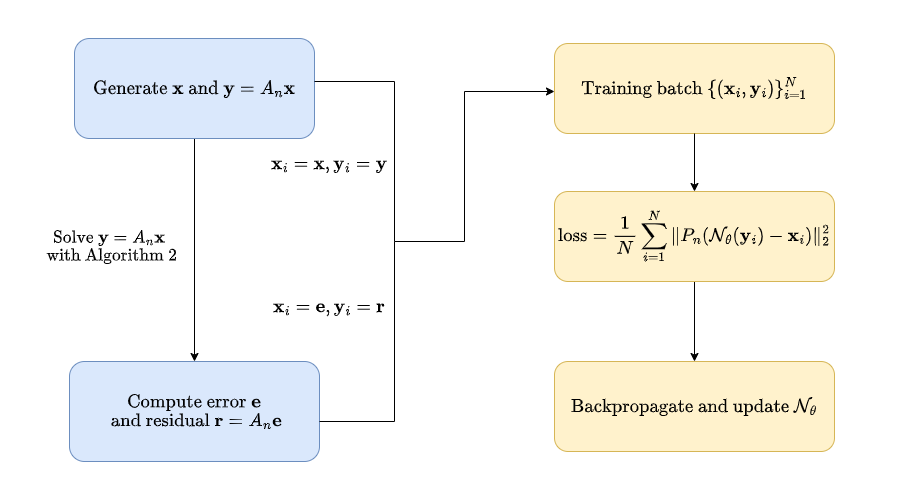}
    \caption{Training process of one batch}
    \label{fig:training}
\end{figure}

\subsection{One-dimensional integral equations} \label{sec:exp_1d}
\subsubsection{\textbf{Tikhonov regularized integral equation}} \label{sec:exp_1d_tik}
We consider (\ref{eq:integral equation}) where $R(u)=u$, $\mathcal{K}$ is the Gaussian smoothing kernel, and $\Omega=[0,1]$. The $R(u)=u$ corresponds to the gradient of the Tikhonov regularization term $\Vert u\Vert_{L^2(\Omega)}^2$.
The coefficient matrix of the discretized system is
\begin{equation}
    A_n=\alpha I_n+K_n \label{eq:discretized_system_tik_1d}
\end{equation}
where $\alpha>0$ is the weight of regularization, $I_n$ is the identity matrix, and $K_n$ is a discrete convolution matrix of a Gaussian smoothing kernel. We set the standard deviation of the Gaussian kernel to be 1.5. Then $A_n$ is a Toeplitz matrix. During training, we define $P_n=(\alpha+1) I_n-A_n$ in the loss function (\ref{eq:expected_loss}). Since the sum of each row of $K_n$ is less than or equal to 1, the maximum eigenvalue of $K_n$ is less than 1, which is guaranteed by the Gershgorin circle theorem \cite{golub2013matrix}. Then, the maximum eigenvalue of $A_n$ is less than $1+\alpha$. 
During the evaluation, we solve the system $A_n\mathbf x=\mathbf y$ using our proposed algorithm (Algorithm \ref{alg:hybrid_iterative} with $\kappa_1=5$, $\kappa_2=0$, and the pre-smoothing step is performed by the weighted Jacobi iterations with weight $0.3$) until the relative residual error is less than $10^{-10}$. For comparison, we also applied the two-grid method to solve this equation. Our two-grid method follows Algorithm \ref{alg:two-grid} with $\kappa=5$ and uses the weighted Jacobi iteration with weight $0.3$ to perform the smoothing step. We test all methods on different $\alpha$ and $n$. We also note that the number of required iterations is almost the same for both the two-grid method and the more advanced multigrid methods. This is because the dominant components in the error are the highest geometric frequency (or equivalently, the lowest algebraic frequency), which is only solvable on the finest grid. Consequently, convergence is primarily affected by the number of smoothing steps applied at the finest level, rather than the specific multigrid scheme. The condition numbers of the system $A_n$ corresponding to different $\alpha$ and $n$ are listed in Table \ref{tab:condNum}.
The \textbf{training time} for our neural operators is about \textbf{$10$min},  \textbf{$12$min}, and \textbf{$16$min} when $n=256$, $512$, and $1024$ respectively. For each pair, all methods are evaluated on 10 random right-hand-side vectors. Table \ref{tab:1dIntegral} reports the average iteration count, evaluation time, and the standard deviation of the number of iterations.

In these experiments, our proposed method converges significantly faster than the two-grid method, requiring fewer iterations and less computational time. Besides, the iteration number of our proposed method is robust to variations in the regularization parameter $\alpha$, whereas the two-grid method is highly sensitive to changes in $\alpha$. Thanks to the highly parallelized structure of neural networks, the computational time of our method scales far more gradually with increasing problem size compared to the two-grid approach.
A graphical comparison (Figure \ref{fig:graph_comp_1d}) highlights the efficiency of our hybrid iterative method in resolving both low- and high-frequency components, while the two-grid method reduces only geometric low-frequency modes.

\begin{table}[]
\centering
\begin{tabular}{|>{\centering}p{0.12\textwidth}|>{\centering}p{0.08\textwidth}|>{\centering}p{0.2\textwidth}|>{\centering}p{0.2\textwidth}|>{\centering}p{0.2\textwidth}|}
\hline
   &    $\alpha$    &  $n=256$ & $n=512$ & $n=1024$ \tabularnewline \hline
        \multirow{3}{*}{\makecell{two-grid\\method}} & $10^{-3}$ & 2405(1.6s)$\pm 24$ & 2407(4s)$\pm 14$ & 2413(14s)$\pm 13$ \tabularnewline \cline{2-5} 
                  & $10^{-4}$ & 15928(11s)$\pm 233$ & 15951(27s)$\pm 145$ & 15983(91s)$\pm 123$\tabularnewline \cline{2-5} 
                                                  & $10^{-5}$ & 45705(26s)$\pm 904$ & 45808(76s)$\pm 550$ & 46000(311s)$\pm 478$
 \tabularnewline \hline
    \multirow{3}{*}{\makecell{proposed\\method}}    & $10^{-3}$ & 6.6(0.05s)$\pm 0.4$ & 7.0(0.06s)$\pm0.0$   & 6.8(0.08s) $\pm 0.4$\tabularnewline \cline{2-5} 
                 & $10^{-4}$ & 6.8(0.05s)$\pm0.4$ & 6.8(0.06s)$\pm0.1$ & 7.0(0.08s)$\pm0.5$ \tabularnewline \cline{2-5} 
                                                 & $10^{-5}$ & 6.8(0.05s)$\pm0.5$ & 7.0(0.06s)$\pm0.0$ & 6.9(0.08s)$\pm0.3$ \tabularnewline \hline
\end{tabular}
\caption{The average number of iterations and evaluation time required for solving the one-dimensional integral equation with Tikhonov regularization.}
\label{tab:1dIntegral}
\end{table}

\begin{figure}[]
    \centering
    \includegraphics[width=1.\linewidth]{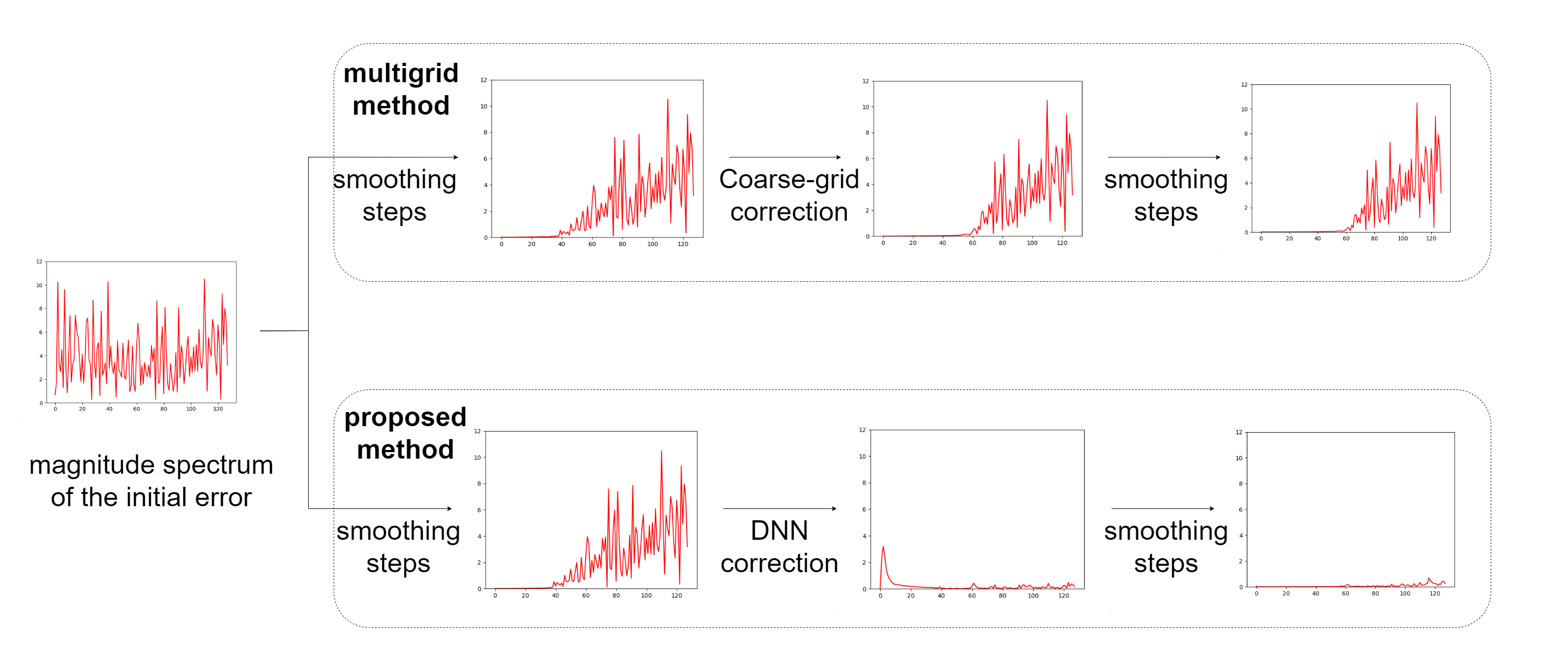}
    \caption{The magnitude spectra from our proposed method and the two-grid method. Since the magnitude spectra of real vectors are symmetric, here we show half of the spectra. In these figures, the x-axis represents the frequency domain where the left part corresponds to geometric low-frequency and the right part corresponds to geometric high-frequency.}
    \label{fig:graph_comp_1d}
\end{figure}
\subsubsection{\bf anisotropic diffusion regularized integral equation} \label{sec:exp_1d_ani}
Here, we also consider another regularization term 
$R(u)(z)=-\nabla\cdot(a(z)u(z))$
for the integral equation (\ref{eq:integral equation}), where $a(z)=1+0.5\sin(2\pi z)$. The coefficient matrix of the discretized system is $A_n=\alpha D_n+K_n$, where $D_n$ corresponds to the regularization term and $K_n$ corresponds to the convolution term. Here, $K_n$ is the same as in the previous subsection, and $D_n$ is a tri-diagonal matrix with varying coefficients along the diagonals. During the training, we choose the $P_n$ in the loss function (\ref{eq:expected_loss}) to be the inverse of the optimal circulant preconditioner \cite{chan1988optimal}. In this case, though $A_n$ is not a Toeplitz matrix, the optimal circulant preconditioner is a good approximation to $A_n$ if $a(z)$ is uniformly bounded away from zero and from above \cite{chan1990circulant}. The inverse of the circulant preconditioner can be efficiently implemented using the Fast Fourier transform. We keep the same implementation details as in the Tikhonov case and only change the weight of the weighted Jacobi iteration to $0.025$ for both the two-grid method and our proposed method. We compare our methods with the two-grid method for this problem with different $n$ and $\alpha$. The condition number of each case is listed in Table \ref{tab:condNum}, and the results are reported in Table \ref{tab:1dIntegral_2}. Our proposed algorithm consistently outperformed the two-grid method in both the iteration number and computational time. The convergence rate of our method is also robust to the change of $\alpha$ compared to the two-grid method.

\begin{table}[]
\centering
\begin{tabular}{|>{\centering}p{0.12\textwidth}|>{\centering}p{0.08\textwidth}|>{\centering}p{0.2\textwidth}|>{\centering}p{0.2\textwidth}|>{\centering}p{0.2\textwidth}|}
\hline
   &   $\alpha$     &  $n=256$ & $n=512$ & $n=1024$ \tabularnewline \hline
  \multirow{3}{*}{\makecell{two-grid\\method}}       & $10^{-3}$ & 1196(0.8s)$\pm28$ & 1200(2s)$\pm19$ & 1201(7s)$\pm11$ \tabularnewline \cline{2-5} 
                                                  & $10^{-4}$ & 8550(6s)$\pm288$ & 8640(15s)$\pm202$ & 8678(49s)$\pm117$\tabularnewline \cline{2-5} 
                                                  & $10^{-5}$ & 33876(22s)$\pm1724$ & 34338(60s)$\pm1214$ &  34566(196s)$\pm700$
 \tabularnewline \hline

 \multirow{3}{*}{\makecell{proposed\\method}}       & $10^{-3}$ & 5.1(0.05s)$\pm0.3$ & 5.6(0.06s)$\pm0.5$   & 6.0(0.07s)$\pm0.0$ \tabularnewline \cline{2-5} 
                                                 & $10^{-4}$ & 6.0(0.05s)$\pm0.0$ & 6.0(0.06s)$\pm0.0$   & 6.0(0.07s)$\pm0.0$ \tabularnewline \cline{2-5} 
                                                 & $10^{-5}$ & 6.0(0.05s)$\pm0.0$ & 6.1(0.06s)$\pm0.3$   & 7.1(0.08s)$\pm0.5$ \tabularnewline \hline
\end{tabular}
\caption{The average number of iterations and evaluation time required for solving the one-dimensional integral equation with anisotropic diffusion regularization.}
\label{tab:1dIntegral_2}
\end{table}

\subsection{Two-dimensional integral equations} \label{sec:exp_2d}
\subsubsection*{\bf Tikhonov regularized integral equation}
We also consider a two-dimensional Integral equation with Tikhonov regularization: $A_{n^2}=\alpha I_{n^2}+K_{n^2}$
where $I_{n^2}=I_n\otimes I_n$, $K_{n^2}=K_n\otimes K_n$, and $\otimes$ denotes the Kronecker product. The size of the system $A_{n^2}$ is $n^2\times n^2$. The $K_n$ here is the same as Section \ref{sec:exp_1d}. The matrix vector multiplications $A_{n^2}\mathbf x$ can be computed efficiently by using this tensor strucutre:
\begin{equation}
    A_{n^2}\mathbf x=\alpha \mathbf x+\text{Vec}(K_n^\top XK_n) \label{eq:An2x}
\end{equation}
where $X\in\mathbb R^{n\times n}$ is the matricization reshaping of $\mathbf x$, and Vec denotes the vectorization function, which is the inverse of matricization, of the input matrix. The multiplication $K_n^\top XK_n$ can further be implemented through the one-dimensional convolution.
{In the loss function $\mathcal{L}_{\rm{emp}}^N(\theta)$, we choose $P_{n^2}=aI_{n^2}-A_{n^2}$} with $a=1+\alpha$. We consider different choices of $\alpha$ and $n$, and the condition numbers of the corresponding systems are shown in Table \ref{tab:condNum}. The \textbf{training time} for our neural operators is about \textbf{$36$min}, \textbf{$152$min}, and \textbf{$645$min} when $n=256$, $512$, and $1024$ respectively.

To solve this linear system, we apply Algorithm \ref{alg:hybrid_iterative} with $\kappa_1=20$, $\kappa=0$, and the pre-smoothing step is performed by the weighted Jacobi iteration with weight 0.01. 
We compare our method against two benchmark approaches: the multigrid method and the preconditioned conjugate gradient (PCG) method, where the preconditioner for the PCG iteration is the block-circulant preconditioner from \cite{chan1992family}. During the implementations of all methods, we implement the matrix vector multiplications via (\ref{eq:An2x}). For the multigrid method, we do 20 steps of the weighted Jacobi iteration for pre-smoothing at each level, and do not perform post-smoothing (This is also called the $\backslash$-cycle). For $n=256$, 512, and 1024, the total multigrid level is chosen to be 5, 6, and 7, respectively. Similar to the one-dimensional cases, here the convergence of multigrid methods depends mainly on the number of smoothing steps on the finest grid rather than the specific multigrid scheme, because the main components in the error are only solvable on the finest grid.

The stopping criterion is when the relative residual error is below $10^{-10}$. Table \ref{tab:2dIntegral} presents the average number of iterations and evaluation time over 10 trials with randomly generated right-hand-side vectors. Our method demonstrates significant improvements over both the multigrid and PCG methods in terms of iteration number and computational efficiency.
Notably, while the PCG method relies on a carefully constructed preconditioner, our approach does not necessarily require one, which makes our method more broadly applicable. 
We also visualize the magnitude spectrum of the errors during iterations for our proposed method and the multigrid method in Figure \ref{fig:graph_comp_2d}. We observe that the multigrid method fails to address geometric high-frequency components, while ours can solve all frequency components efficiently. Besides, the cost of our methods increases very mildly when the matrix size and condition number increase.

\begin{table}[]
\centering
\begin{tabular}{|>{\centering}p{0.12\textwidth}|>{\centering}p{0.08\textwidth}|>{\centering}p{0.2\textwidth}|>{\centering}p{0.2\textwidth}|>{\centering}p{0.21\textwidth}|}
\hline
   &   $\alpha$     &  $n=256$ & $n=512$ & $n=1024$ \tabularnewline \hline
\multirow{3}{*}{\makecell{multigrid\\method}}       & $10^{-3}$ & 608(1.2min)$\pm3$  &  611(4.9min)$\pm 2$   & 611(21.1min)$\pm1$  \tabularnewline \cline{2-5} 
                                                 & $10^{-4}$ & 5072(10min)$\pm 34$ & 5096(41min)$\pm 18$ & 6008 (3hr)$\pm10$\tabularnewline \cline{2-5} 
                                                 & $10^{-5}$  & 41207(2hr)$\pm407$ & 41220(8hr)$\pm 211$ & 41228(33hr)$\pm 110$\tabularnewline \hline 
\multirow{3}{*}{\makecell{PCG\\method}}       & $10^{-3}$ & 89(1.3s)$\pm0.7$  &  94.5(7.2s)$\pm0.5$   & 98(26.3s)$\pm0.2$\tabularnewline \cline{2-5} 
                                                 & $10^{-4}$ & 162.5(2.4s)$\pm0.5$ & 173.0(11.9s)$\pm0.4$ & 180(46.7s)$\pm0.2$ \tabularnewline \cline{2-5} 
                                                 & $10^{-5}$ & 276(4.0s)$\pm2.4$ & 294(20.2s)$\pm1.9$  & 307(73.6s)$\pm1.0$ \tabularnewline \hline  
\multirow{3}{*}{\makecell{proposed\\method}}       & $10^{-3}$ & 8.6(0.19s)$\pm1.0$ & 9.8(1.36s)$\pm0.9$ &  9.8(5.10s)$\pm0.4$\tabularnewline \cline{2-5} 
                                                 & $10^{-4}$ & 11.2(0.26s)$\pm2.2$ & 12.1(1.69s)$\pm1.5$ & 12.3(6.5s)$\pm1.4$ \tabularnewline \cline{2-5} 
                                                 & $10^{-5}$ & 13.8(0.30s)$\pm0.7$ & 15.6(2.10s)$\pm1.8$ & 16.7(8.7s)$\pm1.8$ \tabularnewline \hline
\end{tabular}
\caption{The averaged number of iterations and evaluation time required for solving the two-dimensional integral equation.}
\label{tab:2dIntegral}
\end{table}

\begin{figure}[]
    \centering
    \includegraphics[width=.9\linewidth]{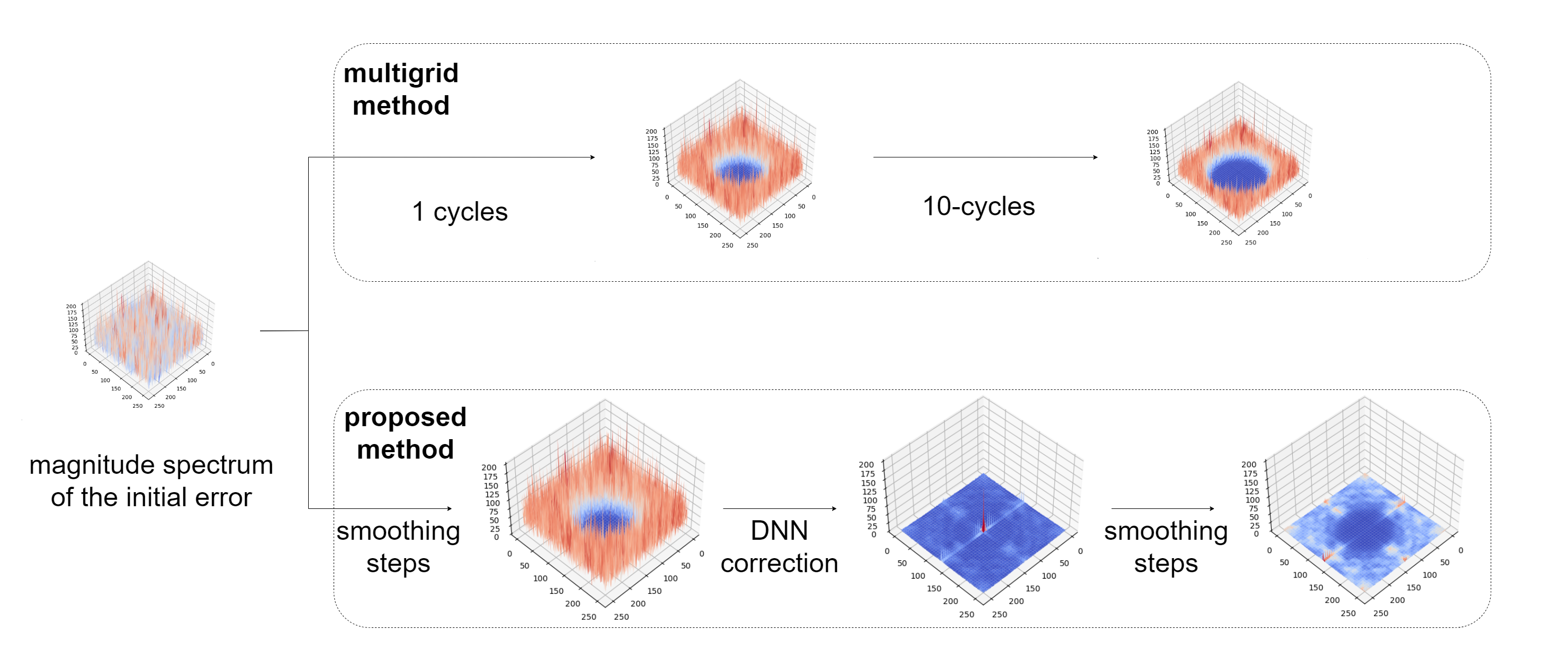}
    \caption{The magnitude spectra from the proposed method and the multigrid method. In these figures, the x-y plane represents the 2-dimensional shifted frequency domain where low-frequency components are shifted to the center, and high-frequency components are located on the corners and boundaries.}
    \label{fig:graph_comp_2d}
\end{figure}

\subsection{Transferability of neural operators}
In previous experiments, we independently trained distinct neural operators for each problem size $n$ when solving the same equation. While this approach suffices for solving systems of the form $A_n\mathbf x=\mathbf y$ with fixed $A_n$ and varying right-hand side y, it becomes computationally expensive when we need to solve an equation discretized at different $n$, because we need to retrain a new $\mathcal N_\theta$ for each $n$. Consequently, we investigate whether a neural operator trained for a small system can be easily generalized to larger systems discretized from the same equation. Thanks to the resolution-invariant structure of the FNO \cite{li2021fourier}, $\mathcal N_\theta$ can directly take input vectors of different sizes.
We consider the one-dimensional integral equation (\ref{eq:discretized_system_tik_1d}). For each value of $\alpha=10^{-3}$, $10^{-4}$, and $10^{-5}$, we first apply the neural operator $\mathcal N_{\theta}$ trained on $n=256$ directly to systems with $n=512$ and $n=1024$ without any modification. Then, we fine-tune $\mathcal N_{\theta}$ on different $n$, \text{i.e.}, continue to train $\mathcal N_{\theta}$ for 50 epochs on the new system, to see the improvements. The numerical results are presented in Table \ref{tab:transferability}. We observe that directly applying $\mathcal{N}_\theta$ trained on small $n$ to larger systems can still work, but it takes much more iterations to converge. However, the convergence performance improves significantly after only a few epochs of fine-tuning. This suggests that our neural operator retains transferability across different system sizes, requiring only minimal fine-tuning effort for effective adaptation.  
\begin{table}[]
\centering
\begin{tabular}{|>{\centering}p{0.15\textwidth}|>{\centering}p{0.1\textwidth}|>{\centering}p{0.25\textwidth}|>{\centering}p{0.25\textwidth}|}
\hline
   &   $\alpha$     &  Directly apply & After fine-tunning \tabularnewline \hline
\multirow{3}{*}{$n=512$}       & $10^{-3}$ & 38.3(0.33s)$\pm5.2$ & 9.7(0.08s)$\pm1.0$   \tabularnewline \cline{2-4} 
                                                 & $10^{-4}$ & 63.7(0.56s)$\pm4.4$ & 9.9(0.08s)$\pm0.8$   \tabularnewline \cline{2-4} 
                                                 & $10^{-5}$ & 47.1(0.42s)$\pm2.5$ & 8.8(0.08s)$\pm0.6$ \tabularnewline \hline

  \multirow{3}{*}{$n=1024$}       & $10^{-3}$ & 51.1(0.60s)$\pm3.6$ & 12.1(0.15s)$\pm1.6$  \tabularnewline \cline{2-4} 
                                                  & $10^{-4}$ & 113.6(1.32s)$\pm6.9$ & 14.2(0.17s)$\pm4.4$ \tabularnewline \cline{2-4} 
                                                  & $10^{-5}$ & 79.0(0.92s)$\pm 10.4$ & 10.8(0.13s)$\pm0.7$
 \tabularnewline \hline
\end{tabular}
\caption{The average number of iterations and evaluation time when transferring models trained with $n=256$ to $n=512$ and $n=1024$.}
\label{tab:transferability}
\end{table}

\subsection{Validating the convergence rate of the generalization error}
In Theorem \ref{theorem:generalization_error}, we derive an analytical upper bound for training neural operators. By fixing the structure of neural operators, the expected $L_2$ error would converge at the order of $O(N^{-1/2})$ if we can find the global minimizer, \textit{i.e.}, $\delta=0$. Here, we numerically estimate the convergence rate and compare it with the theoretical result.

We consider solving the system $A_n=\alpha I_n+K_n$ (the same as Section \ref{sec:exp_1d_tik}) with $n=32$ and $\alpha=0.01$. Let $\mathcal{N}_\theta$ be a two-layer network defined as (\ref{eq:two_layer_network}). We then randomly generate a finite set of training samples and train $\mathcal{N}_\theta$ by minimizing the empirical loss (\ref{eq:empirical_loss}). Since our generalization error estimate in Theorem \ref{theorem:generalization_error} works for general $P_n$, we set $P_n=I_n$ in the loss function for simplicity. Due to the existence of the optimization error $\delta$, the actual convergence rate might be lower than the theoretical rate. To mitigate the effect of optimization error, we train $\mathcal{N}_\theta$ with 5 different random initializations and select the model with the minimum empirical loss. The testing error is shown in Table \ref{tab:convergence_rate}. Here, we choose the number of training samples $N_i=10\times2^i$ for $i=0,1,\dots,4$ and computed the generalization error accordingly. We also estimated the convergence rate for $i=1,2,3,4$ by 
\[ \log\left(\frac{\text{error}(N_{i-1})}{\text{error}(N_{i})}\right)\Big/\log\left(\frac{N_{i-1}}{N_{i}}\right), \]
where $\text{error}(N_i)$ represents the generalization error when training on $N_i$ samples, and shows the convergence rate (marked in parentheses for each $N_i$) as well. We see the convergence rate is close to the theoretical result, \textit{i.e.}, $-0.5$, when $N$ is small. When $N$ is large, the optimization error will start to dominate the generalization error, so the observed convergence rate also decreases.

\begin{table}[]
\centering
\begin{tabular}{|>{\centering}p{0.11\textwidth}|>{\centering}p{0.13\textwidth}|>{\centering}p{0.13\textwidth}|>{\centering}p{0.13\textwidth}|>{\centering}p{0.13\textwidth}|>{\centering}p{0.13\textwidth}|}
\hline
 $N_i$ &  $N_1=10$   & $N_2=20$ & $N_3=40$ & $N_4=80$ & $N_5=160$   \tabularnewline \hline
error &  0.92   &  0.67(-0.46)   & 0.47(-0.51) &  0.32(-0.55) & 0.26(-0.30)  \tabularnewline \hline
\end{tabular}
\caption{Generalization error and estimated convergence rate (marked inside parentheses) with respect to different numbers of training samples.}
\label{tab:convergence_rate}
\end{table}


\subsection{Validation of Assumption 1 in Theorem \ref{theorem:error_estimate}}
We also validate the correcting condition assumed in Theorem \ref{theorem:error_estimate}. We consider the one-dimensional and two-dimensional integral equation with Tikhonov regularization, and calculate the ratio
\begin{equation} \frac{\Vert \mathcal{N}(\mathbf r)-A^{-1}_n\mathbf r\Vert^2_{A_n}}{\Vert \mathcal{N}(\mathbf r)-A^{-1}_n\mathbf r\Vert^2_{A_n^2} /a_0}\label{eq:correcting_condition2}\end{equation}
for 100 randomly generated right-hand-sides $\mathbf r$. The results are shown in Table \ref{tab:assumption2_ratio}. In all scenarios, the ratios (\ref{eq:correcting_condition2}) are very stable, indicating that the correcting condition holds with high probability for some $\delta_2$. 
\begin{table}[h]
\centering
\begin{tabular}{|>{\centering}p{0.08\linewidth}|>{\centering}p{0.08\linewidth}|>{\centering}p{0.22\linewidth}|>{\centering}p{0.22\linewidth}|>{\centering}p{0.22\linewidth}|}
\hline
   &    $\alpha$    &  $n=256$ & $n=512$ & $n=1024$ \tabularnewline \hline
\multirow{3}{*}{\makecell{$d=1$}}       & $10^{-3}$ & $0.267\pm2.97\times10^{-5}$ & $0.267\pm2.91\times10^{-5}$   & $0.267\pm2.87\times10^{-7}$ \tabularnewline \cline{2-5} 
                                                 & $10^{-4}$ & $0.266\pm4.27\times10^{-6}$ & $0.266\pm2.61\times10^{-7}$ & $0.266\pm2.64\times10^{-8}$ \tabularnewline \cline{2-5} 
                                                 & $10^{-5}$ & $0.266\pm4.99\times10^{-6}$ & $0.266\pm7.73\times10^{-6}$ & $0.266\pm6.39\times10^{-5}$ \tabularnewline \hline
\multirow{3}{*}{$d=2$}       & $10^{-3}$ & $0.290\pm1.71\times10^{-3}$ & $0.304\pm2.87\times10^{-3}$   & $0.493\pm9.84\times10^{-3}$ \tabularnewline \cline{2-5} 
                                                 & $10^{-4}$ & $0.270\pm1.86\times10^{-4}$ & $0.266\pm1.51\times10^{-5}$ & $0.266\pm1.16\times10^{-6}$ \tabularnewline \cline{2-5} 
                                                 & $10^{-5}$ & $0.266\pm1.93\times10^{-6}$ & $0.266\pm9.97\times10^{-6}$ & $0.279\pm3.01\times10^{-4}$ \tabularnewline \hline
\end{tabular}
\caption{Validation of the correcting condition}
\label{tab:assumption2_ratio}
\end{table}

\section{Conclusion and future works}
In this work, we combine classical iterative solvers and neural operators to solve large-scale linear systems arising from convolution-type integral equations. We propose a new training strategy for neural operators that can address algebraic low-frequency components efficiently. We also estimate the generalization error for our proposed training strategy with two-layer neural networks, providing an upper bound on the generalization error in terms of the number of training samples, problem size, and the optimization error. In the future, we will also try to estimate the generalization error for some general multi-layer networks like \cite{ohn2019smooth}.

In the numerical experiments, our algorithm outperforms some classical numerical methods, such as the multigrid and the PCG method, by a huge margin in both iteration numbers and computational time. Our method has demonstrated the ability to address large-scale and ill-conditioned linear systems. Thanks to the highly parallel structures of neural networks, the computational time increases gradually when the problem size increases. We also show that the proposed method can be easily transferred to systems discretized at different sizes with mild cost.

Though we focus on a certain type of integral equations (the integral equation of the second kind), the proposed algorithm and training strategy can be generalized to other types of problems directly. We will consider applying the proposed method to different integral equations, \textit{e.g.}, the integral equation of the first kind, in the following works.
One limitation of our proposed method is the requirement for training, which takes a much longer time than evaluation. When the underlying system changes frequently, we need to retrain our neural operators many times. {To address this issue, we will try to design multiple-input neural operators, like the Meta-MgNet \cite{chen2022meta}, that can solve a class of linear systems instead of one specific system.}


\bibliography{bibfile} 
\bibliographystyle{siamplain}

\appendix
\gdef\thesection{\Alph{section}} 
\makeatletter
\renewcommand\@seccntformat[1]{\appendixname\ \csname the#1\endcsname.\hspace{0.5em}}
\makeatother
\section{Rademacher complexity and generalization error}{\label{sec:rc_complexity}}
In this section, we introduce the definition of the Rademacher complexity and how to estimate the generalization error using this complexity. 
\begin{definition}
    Let $\{\mathbf x_i\in\mathbb R^n\}_{i=1}^N$ be a set of i.i.d. random variables drawn from certain distribution $\mathcal{X}$ and $\{\varepsilon_i\}_{i=1}^N$ be a set of i.i.d Rademacher variables (\textit{i.e.}, only taking value of $1$ and $-1$ with equal probability). Then the \textbf{empirical Rademacher Complexity} of the function class $\mathcal{F}$ is defined as
    \begin{equation*}
      \hat{R}_N(\mathcal{F}):= \mathbb{E}_\varepsilon\left(\sup_{f\in\mathcal{F}}\left(\frac{1}{N}\sum_{i=1}^N \varepsilon_i f(\mathbf x_i)\right)\right).
    \end{equation*}
    Taking its expectation over $\mathcal X$ yields the \textbf{Rademacher Complexity} of the function class $\mathcal{F}$
    \begin{equation*}
      R_N(\mathcal{F}):= \mathbb{E}_\mathcal{X}\left(\hat{R}_N(\mathcal{F})\right)=\mathbb{E}_\mathcal{X}\mathbb{E}_\varepsilon\left(\sup_{f\in\mathcal{F}}\left(\frac{1}{N}\sum_{i=1}^N \varepsilon_i f(\mathbf x_i)\right)\right).
    \end{equation*} \label{def:Rad_comp}
\end{definition}
\begin{remarks}
    In the rest of this section, we assume that all $\mathbf x_i$ sampled from $\mathcal{X}$ are uniformly bounded: $\Vert \mathbf x_i\Vert_2\leq 1$.
\end{remarks}
Some calculation rules of the Rademecher complexity are given below
\begin{lemma}
    Let $\mathcal{F},\mathcal{G}$ be function classes and $a,b$ be constants. Then
    \begin{enumerate}
        \item $R_N(\mathcal F)\leq R_N(\mathcal G)$ if $\mathcal F\subseteq \mathcal G$
        \item $R_N(\mathcal{F+G})= R_N(\mathcal{F}) + R_N(\mathcal{G})$.
        \item $R_N(a\mathcal{F}) = |a|R_N(\mathcal{F}).$
        \item (contraction lemma \cite{ledoux2013probability}) Assume that $\sigma:\mathbb{R}\mapsto\mathbb{R}$ is a $L$-Lipschitz function, then $R_N(\sigma(\mathcal{F}))\leq LR_N(\mathcal{F})$.
        \item If $\Vert f\Vert_\infty$ is uniformly bounded for any $f\in\mathcal{F}$, then $R_N(\mathcal{F}^2)\leq 2\Vert f\Vert_\infty R_N(\mathcal{F})$ where $\mathcal{F}^2:=\{f(\mathbf x)^2|f\in\mathcal{F}\}$. 
        \item If $\Vert f\Vert_\infty$ is uniformly bounded for any $f\in\mathcal{F}$, then $R_N(\mathcal{F}\times\mathcal{F})\leq 6\Vert f\Vert_\infty R_N(\mathcal{F})$ where $\mathcal{F}\times\mathcal{F}:=\{f_1(\mathbf x)f_2(\mathbf x)|f_1,f_2\in\mathcal{F}\}$.
    \end{enumerate}
    \label{lem::computing_rules}
\end{lemma}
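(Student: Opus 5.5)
Each rule can be verified directly from Definition \ref{def:Rad_comp}. The plan is to work with the empirical complexity $\hat R_N$ for a fixed sample $\{\mathbf x_i\}_{i=1}^N$ and take $\mathbb{E}_\mathcal{X}$ only at the end. Rules 1--4 are structural properties of the supremum and of the symmetric variables $\varepsilon_i$. Rules 5 and 6 are then obtained from rules 1--4 by algebraic reductions.

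\textbf{Rules 1--3.} For rule 1, enlarging the index set of a supremum can only increase it, pointwise in $\varepsilon$; then take expectations.

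For rule 2, read $\mathcal{F}+\mathcal{G}$ as the Minkowski sum $\{f+g\}$. The two suprema decouple:
\[\sup_{f,g}\Bigl(\tfrac1N\sum_i\varepsilon_i f(\mathbf x_i)+\tfrac1N\sum_i\varepsilon_i g(\mathbf x_i)\Bigr)=\sup_f(\cdot)+\sup_g(\cdot).\]
Linearity of expectation then gives equality.

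For rule 3 with $a\ge0$, pull $a$ out of the supremum. For $a<0$, write $a=-|a|$. The vector $(-\varepsilon_i)_i$ has the same distribution as $(\varepsilon_i)_i$, so $R_N(-\mathcal F)=R_N(\mathcal F)$.

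\textbf{Rule 4 (contraction).} This is the Ledoux--Talagrand contraction principle \cite{ledoux2013probability}, and I would cite it. If a self-contained argument is wanted, the standard one peels off one Rademacher variable at a time. Condition on $\varepsilon_2,\dots,\varepsilon_N$ and let $u(f)$ denote the remaining sum. Then
\[\mathbb{E}_{\varepsilon_1}\sup_f\bigl(u(f)+\varepsilon_1\sigma(f(\mathbf x_1))\bigr)=\tfrac12\sup_{f,g}\bigl(u(f)+u(g)+\sigma(f(\mathbf x_1))-\sigma(g(\mathbf x_1))\bigr).\]
The Lipschitz bound gives $\sigma(f(\mathbf x_1))-\sigma(g(\mathbf x_1))\le L|f(\mathbf x_1)-g(\mathbf x_1)|$. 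Because the expression is symmetric in $(f,g)$, the absolute value can be dropped. The right-hand side is therefore at most $\mathbb{E}_{\varepsilon_1}\sup_f\bigl(u(f)+\varepsilon_1Lf(\mathbf x_1)\bigr)$. Iterating over $i=2,\dots,N$ completes the argument. This step is the only genuinely nontrivial one. Note that for the version without absolute values stated here, no condition $\sigma(0)=0$ is needed.

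\textbf{Rule 5.} Let $B:=\sup_{f\in\mathcal F}\Vert f\Vert_\infty$. Every $f\in\mathcal F$ takes values in $[-B,B]$. On that interval the map $z\mapsto z^2$ is $2B$-Lipschitz, and it can be extended to a $2B$-Lipschitz function on all of $\mathbb{R}$ by making it constant outside $[-B,B]$. Rule 4 then gives $R_N(\mathcal F^2)\le 2BR_N(\mathcal F)$.

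\textbf{Rule 6.} Use polarization:
\[f_1f_2=\tfrac12\bigl((f_1+f_2)^2-f_1^2-f_2^2\bigr).\]
This shows $\mathcal F\times\mathcal F\subseteq \tfrac12\bigl((\mathcal F+\mathcal F)^2+(-\mathcal F^2)+(-\mathcal F^2)\bigr)$.
\begin{itemize}
\item Every function in $\mathcal F+\mathcal F$ is bounded by $2B$. By rules 5 and 2, $R_N((\mathcal F+\mathcal F)^2)\le 4B\cdot 2R_N(\mathcal F)=8BR_N(\mathcal F)$.
\item By rules 3 and 5, each $-\mathcal F^2$ term contributes at most $2BR_N(\mathcal F)$.
\end{itemize}
Combining these with rules 1, 2 and 3 yields
\[R_N(\mathcal F\times\mathcal F)\le\tfrac12(8+2+2)BR_N(\mathcal F)=6BR_N(\mathcal F),\]
which is exactly the constant claimed.
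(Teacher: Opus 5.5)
Your proposal is correct and follows the paper's proof essentially step by step. Rules 1--3 come directly from the definition, and rule 4 is the cited contraction principle (the paper points to Shalev-Shwartz--Ben-David, Lemma 26.9). Rule 5 applies contraction to $z\mapsto z^2$ on the bounded range, and rule 6 uses the same polarization identity with the same bookkeeping $\tfrac12(8+2+2)B=6B$. Your additions (the peeling sketch of the contraction lemma, the explicit $2B$-Lipschitz extension $\min(z^2,B^2)$, and the sign-symmetry argument for $a<0$) make rigorous points that the paper only states informally.
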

\begin{proof}
    The first 3 computation rules can be directly derived from the definition. 
    The proof of the contraction lemma can be found in \cite[Lemma 26.9]{Shalev-Shwartz_Ben-David_2014}. Rule 5 is a direct application of the contraction lemma. Though $\sigma(x)=(x)^2$ is not a Lipschitz function when $x\in\mathbb R$, its derivative $2x$ is bounded when $x$ is bounded. Since $\sup_{f\in\mathcal F}\Vert f\Vert_\infty$ is assumed to be finite, we can apply the contraction lemma to estimate $R_N(\mathcal F^2)$ directly. 
    What remains is to prove property 6. For any $f_1f_2$, we can write it as $f_1f_2=\frac{1}{2}(f_1+f_2)^2-\frac{f_1^2}{2}-\frac{f_2^2}{2}$, so we have $\mathcal F\times\mathcal F\subseteq \frac{1}{2}(\mathcal F+\mathcal F)^2-\frac{1}{2}\mathcal F^2-\frac{1}{2}\mathcal F^2$. Then, applying calculation rules 1 to 5, we have
    \begin{align*}
        &R_N(\mathcal{F}\times\mathcal{F})\leq R_N\left(\frac{1}{2}(\mathcal F+\mathcal F)^2-\frac{1}{2}\mathcal F^2-\frac{1}{2}\mathcal F^2\right)\leq R_N\left(\frac{1}{2}(\mathcal F+\mathcal F)^2\right)+R_N(\mathcal F^2)\\
        &\leq \frac{1}{2}2(\sup_{f\in\mathcal F}\Vert f\Vert_{\infty}+\sup_{f\in\mathcal F}\Vert f\Vert_{\infty})(2R_N(\mathcal F))+2\sup_{f\in\mathcal F}\Vert f\Vert_{\infty}R_N(\mathcal F)=6\sup_{f\in\mathcal F}\Vert f\Vert_{\infty}R_N(\mathcal F).
    \end{align*}
    The same technique is used in the proof of \cite[Lemma 4.4]{li2024priori}. 
\end{proof}

Lemma \ref{lem:linear_trans} (\cite[Lemma 26.10]{Shalev-Shwartz_Ben-David_2014}) gives an upper bound for the Rademacher complexity for a class of linear functions.
\begin{lemma}
    Let $\mathcal{G}$ be the linear transformation function class defined by 
    \begin{equation*}
        \mathcal{G}:= \{g(\mathbf x)=\mathbf w^\top \mathbf x|\mathbf w\in\mathbb{R}^n,\|\mathbf w\|_2\leq C\},
    \end{equation*}
    where $C>0$.
    Then we have $R_N(\mathcal{G})\leq \frac{\sup_{\mathbf x\sim\mathcal X}\Vert \mathbf x\Vert_2 C}{\sqrt{N}}.$
    \label{lem:linear_trans}
\end{lemma}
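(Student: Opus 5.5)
We prove the bound $R_N(\mathcal{G})\le C\sup\Vert\mathbf x\Vert_2/\sqrt N$ by first bounding the empirical Rademacher complexity $\hat R_N(\mathcal{G})$ for a fixed sample $\{\mathbf x_i\}_{i=1}^N$, and then taking the expectation over $\mathcal X$. Since that bound will not depend on the sample beyond $\sup\Vert\mathbf x_i\Vert_2$, the final step is trivial.

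\textbf{Step 1: compute the supremum exactly.} By linearity,
\[
\sup_{\Vert\mathbf w\Vert_2\le C}\frac1N\sum_{i=1}^N\varepsilon_i\mathbf w^\top\mathbf x_i=\sup_{\Vert\mathbf w\Vert_2\le C}\frac1N\mathbf w^\top\Big(\sum_{i=1}^N\varepsilon_i\mathbf x_i\Big).
\]
By the Cauchy--Schwarz inequality, with equality attained at $\mathbf w$ parallel to the sum, this equals
\[
\frac CN\Big\Vert\sum_{i=1}^N\varepsilon_i\mathbf x_i\Big\Vert_2 .
\]

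\textbf{Step 2: pass the expectation inside a square root.} By Jensen's inequality for the concave square root,
\[
\mathbb E_\varepsilon\Big\Vert\sum_i\varepsilon_i\mathbf x_i\Big\Vert_2\le\Big(\mathbb E_\varepsilon\Big\Vert\sum_i\varepsilon_i\mathbf x_i\Big\Vert_2^2\Big)^{1/2}.
\]
Expand the square as $\sum_{i,j}\mathbb E[\varepsilon_i\varepsilon_j]\,\mathbf x_i^\top\mathbf x_j$. Independence and zero mean of the Rademacher variables give $\mathbb E[\varepsilon_i\varepsilon_j]=\delta_{ij}$, so only the diagonal terms survive:
\[
\sum_i\Vert\mathbf x_i\Vert_2^2\le N\sup_{\mathbf x\sim\mathcal X}\Vert\mathbf x\Vert_2^2 .
\]

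\textbf{Step 3: combine.} Steps 1 and 2 give
\[
\hat R_N(\mathcal{G})\le\frac CN\sqrt N\sup\Vert\mathbf x\Vert_2=\frac{C\sup\Vert\mathbf x\Vert_2}{\sqrt N}.
\]
This bound holds for every realization of the sample, so taking $\mathbb E_{\mathcal X}$ preserves it and yields the claim for $R_N(\mathcal{G})$.

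The only point needing care is Step 2: the cross terms must vanish, which is exactly where independence of the $\varepsilon_i$ is used, and Jensen's inequality must be applied in the correct direction. The rest is routine.
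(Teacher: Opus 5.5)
Your proof is correct and is the standard argument: Cauchy--Schwarz reduces the supremum to $\frac{C}{N}\Vert\sum_i\varepsilon_i\mathbf x_i\Vert_2$, Jensen's inequality passes to the second moment, and $\mathbb E[\varepsilon_i\varepsilon_j]=\delta_{ij}$ removes the cross terms. The paper gives no proof of its own; it cites Lemma 26.10 of Shalev-Shwartz and Ben-David, whose proof is exactly this argument with $C=1$, and the factor $C$ simply scales through.
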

Using Lemma \ref{lem:linear_trans}, we can estimate the Rademacher complexity of the composition of a ReLU function and linear transformations.
\begin{lemma}
    Let $\mathcal{G}$ be defined by 
    \begin{equation*}
        \mathcal{G}:= \{g(\mathbf x)=a\textnormal{ReLU}(\mathbf w^\top \mathbf x)|\mathbf w\in\mathbb{R}^n,\|\mathbf w\|_2\leq 1, |a|\leq C\},
    \end{equation*}
    where $C>0$.
    Then we have $R_N(\mathcal{G})\leq \frac{\sup_{\mathbf x\sim\mathcal X}\Vert \mathbf x\Vert_2 C}{\sqrt{N}}.$
    \label{lem:relu_complexity}
\end{lemma}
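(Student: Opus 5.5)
The plan is to write the class $\mathcal{G}$ as a composition of the linear class $\mathcal{G}_0:=\{\mathbf w^\top\mathbf x \mid \Vert\mathbf w\Vert_2\leq 1\}$ with the ReLU and an outer scalar multiplier, and then strip off the two outer layers using the calculation rules of Lemma \ref{lem::computing_rules} together with Lemma \ref{lem:linear_trans}.

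First, I will remove the outer coefficient $a$. For fixed samples and signs, the supremum over $(a,\mathbf w)$ of $\frac1N\sum_i\varepsilon_i a\,\text{ReLU}(\mathbf w^\top\mathbf x_i)$ factors. Since the expression is linear in $a\in[-C,C]$, the supremum over $a$ equals $C\left|\frac1N\sum_i\varepsilon_i\,\text{ReLU}(\mathbf w^\top\mathbf x_i)\right|$.

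The absolute value is the main obstacle, because rule 3 of Lemma \ref{lem::computing_rules} applies to a fixed scalar, not to a supremum over $a$. I would handle it with symmetry. The class $\mathcal{G}$ is closed under negation ($a\mapsto -a$), so $\sup_{g\in\mathcal G}\frac1N\sum_i\varepsilon_i g(\mathbf x_i)$ equals the supremum of the absolute value over the ReLU class, multiplied by $C$. I will then bound this without a factor of $2$ by applying the contraction lemma in its form for symmetric classes. Concretely, the class $\mathcal{H}:=\{b\,\text{ReLU}(\mathbf w^\top\mathbf x) : |b|\leq 1,\ \Vert\mathbf w\Vert_2\leq 1\}$ can be viewed as $\{\phi_b(\mathbf w^\top\mathbf x)\}$, where each $\phi_b(t)=b\,\text{ReLU}(t)$ is $1$-Lipschitz with $\phi_b(0)=0$.

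The cleanest route is to note that $\sup_{|b|\le 1,\mathbf w}\sum_i\varepsilon_i b\,\text{ReLU}(\mathbf w^\top\mathbf x_i)=\sup_{\mathbf w}\max\bigl(\sum_i\varepsilon_i\text{ReLU}(\mathbf w^\top \mathbf x_i),\,\sum_i(-\varepsilon_i)\text{ReLU}(\mathbf w^\top\mathbf x_i)\bigr)$, which is nonnegative since $\mathbf w=0$ gives $0$. I then run the Talagrand/Ledoux peeling argument of \cite[Lemma 26.9]{Shalev-Shwartz_Ben-David_2014} one coordinate at a time. Conditioning on all signs except $\varepsilon_1$, I use the $1$-Lipschitz property of ReLU to replace $\text{ReLU}(\mathbf w^\top\mathbf x_1)$ by $\mathbf w^\top\mathbf x_1$, and I use the symmetry $\mathbf w\mapsto-\mathbf w$ of the unit ball to absorb the resulting sign ambiguity into the same supremum. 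This yields $R_N(\mathcal H)\le R_N(\mathcal G_0)$.

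Finally, rule 3 gives $R_N(\mathcal{G})=C\,R_N(\mathcal H)$, since $\mathcal G=C\mathcal H$. Lemma \ref{lem:linear_trans} with norm bound $1$ gives $R_N(\mathcal G_0)\le \sup\Vert\mathbf x\Vert_2/\sqrt N$. Combining these yields $R_N(\mathcal G)\le \sup_{\mathbf x\sim\mathcal X}\Vert\mathbf x\Vert_2\,C/\sqrt N$.

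If the sign-absorbing step in the peeling argument fails to avoid a factor of $2$, a fallback is available. The authors' rules suggest they simply apply rule 3 and rule 4 directly, writing $R_N(\mathcal G)\le C\,R_N(\text{ReLU}(\mathcal G_0))\le C\,R_N(\mathcal G_0)$. I would present that chain and justify the first inequality by the symmetry remark above.
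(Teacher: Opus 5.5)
\paragraph{The gap.} Your proposal has a genuine gap, and it is exactly at the step you flagged: removing the absolute value in $C\,\mathbb{E}_\varepsilon\sup_{\mathbf w}\left|\frac1N\sum_i\varepsilon_i\mathrm{ReLU}(\mathbf w^\top\mathbf x_i)\right|$ without paying a factor $2$.

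The peeling argument does not achieve this. After averaging over $\varepsilon_1$, the method needs a pairwise comparison. For every pair $(b,\mathbf w),(b',\mathbf w')$, the increment $b\,\mathrm{ReLU}(u)-b'\,\mathrm{ReLU}(u')$, with $u=\mathbf w^\top\mathbf x_1$ and $u'=\mathbf w'^\top\mathbf x_1$, must be bounded by $|u-u'|$. This lets the pair, or its swap, be matched in the linearized supremum with the same value of the remaining coordinates $\Psi(b,\mathbf w)+\Psi(b',\mathbf w')$, where $\Psi(b,\mathbf w)=b\sum_{i\ge2}\varepsilon_i\mathrm{ReLU}(\mathbf w^\top\mathbf x_i)$.

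\begin{itemize}
\item For $b=b'$ the bound holds.
\item For $b=-b'$ the increment is $\pm(\mathrm{ReLU}(u)+\mathrm{ReLU}(u'))$. This exceeds $|u-u'|$, for instance when $u=u'>0$.
\item Repairing the sign by $\mathbf w\mapsto-\mathbf w$ is not allowed. It changes $\Psi(b,\mathbf w)$, because the coordinates $i\ge2$ still carry a ReLU, and ReLU is not odd.
\end{itemize}

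Writing the class as $\{\phi_b(\mathbf w^\top\mathbf x)\}$ does not help either. The contraction lemma needs one fixed Lipschitz map per coordinate, not a map chosen together with the element of the class. The absolute-value form of the contraction principle (Ledoux--Talagrand, Thm.~4.12) is available only with a factor $2$.

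\paragraph{The fallback.} Your fallback does not close the gap; its first inequality is false. The inequality $R_N(\mathcal G)\le C\,R_N(\mathrm{ReLU}\circ\mathcal G_0)$ fails, and your symmetry remark actually proves the opposite direction, since $C\sup|\cdot|\ge C\sup(\cdot)$. Take $N=1$ and $\mathcal X$ a point mass at $\mathbf x_1\neq 0$:
\begin{itemize}
\item $R_1(\mathcal G)=C\Vert\mathbf x_1\Vert_2$ for either sign of $\varepsilon_1$, by taking $a=\varepsilon_1 C$ and $\mathbf w=\mathbf x_1/\Vert\mathbf x_1\Vert_2$;
\item but $C\,R_1(\mathrm{ReLU}\circ\mathcal G_0)=C\Vert\mathbf x_1\Vert_2/2$.
\end{itemize}
This is precisely the step in the paper's own proof, where $\sup_{a\in[-C,C]}aS(\mathbf w)=C|S(\mathbf w)|$ is silently replaced by $C\,S(\mathbf w)$. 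Following the paper therefore does not rescue your argument.

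\paragraph{What can be proved.} Both $\sup_{\mathbf w}S$ and $\sup_{\mathbf w}(-S)$ are nonnegative (take $\mathbf w=0$). Hence $\sup_{\mathbf w}|S|\le\sup_{\mathbf w}S+\sup_{\mathbf w}(-S)$, and $\varepsilon\mapsto-\varepsilon$ shows the two terms have equal expectation. Combined with rule 4 and Lemma \ref{lem:linear_trans}, this yields
\[
R_N(\mathcal G)\le \frac{2C\sup\Vert\mathbf x\Vert_2}{\sqrt N}.
\]
Propagated into Theorem \ref{theorem:generalization_error}, this changes the constant $108$ to $180$.

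Reaching constant $1$ would need a genuinely new argument exploiting the structure of ReLU together with the symmetric linear class. Note that for $N=1$ the claimed bound holds with equality, so such an argument cannot lose anything there.
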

\begin{proof}
    Let $\{\mathbf x_i\}_{i=1}^N$ be a set of randomly generated vectors.
    In Definition \ref{def:Rad_comp}, we have
    \begin{align*}
        &\sup_{g\in\mathcal G} \left( \frac{1}{N}\sum_{i=1}^N \epsilon_ig(\mathbf x_i) \right)=\sup_{a\in[-C,C],\Vert\mathbf w\Vert_2\leq 1} \left( \frac{a}{N}\sum_{i=1}^N \epsilon_i\textnormal{ReLU}(\mathbf w^\top \mathbf x_i)\right)\\
        &\leq C \sup_{\Vert\mathbf w\Vert_2\leq 1} \left( \frac{1}{N}\sum_{i=1}^N \epsilon_i\textnormal{ReLU}(\mathbf w^\top \mathbf x_i)\right),
    \end{align*}
    which implies that $R_N(\mathcal G)\leq CR_N(\{\textnormal{ReLU}(\mathbf w^\top \mathbf x)|\mathbf w\in\mathbb{R}^n,\|\mathbf w\|_2\leq 1\})$. Then, using the Lipschitz continuity of the ReLU function and the contraction lemma (Lemma \ref{lem::computing_rules}{(3)}), we get
    \[ R_N(\mathcal G)\leq CR_N(\{\mathbf w^\top \mathbf x|\mathbf w\in\mathbb{R}^n,\|\mathbf w\|_2\leq 1\})\leq\frac{C\sup_{\mathbf x}\Vert \mathbf x\Vert_2}{\sqrt{N}},\]
    where the last inequality is derived from Lemma \ref{lem:linear_trans}.
\end{proof}
Let $l:\mathbb{R}\rightarrow\mathbb{R}$ be a given continuous function and $\mathcal{L}:=\{l(f(\mathbf x))|f\in\mathcal{F}\}$ be a function class. In machine learning problems, $l$ usually represents the loss function that measures the difference between the model $f$ and a given ground truth label. For any $f\in\mathcal{F}$, we define
\begin{equation}
    L_\mathcal{X}(f):=\mathbb{E}_{\mathbf x\sim\mathcal{X}}(l(f(\mathbf x))) \text{ and } L_N(f):=\frac{1}{N}\sum_{i=1}^N(l(f(\mathbf x_i))).
\end{equation}
The difference $|L_N(f)-L_\mathcal{X}(f)|^2$ is called the generalization error, which can be bounded by the Rademacher complexity using Lemma \ref{lemma:quadrature_error} (\cite[Theorem 26.5]{Shalev-Shwartz_Ben-David_2014}).
\begin{lemma}\label{lemma:quadrature_error}
    Let $\mathcal{F}$ be a set of functions, $\{\mathbf x_1,\cdots, \mathbf x_N\}$ be i.i.d. random variables sampled from $\mathcal{X}$. Then, for any $\epsilon>0$ and $f\in\mathcal F$, the following inequality holds with probability at least $1-\epsilon$,
     \begin{equation}
         L_\mathcal{X}(f) - L_N(f)\leq 2R_N(l\circ \mathcal{F})+\sup_{l(f)\in l\circ\mathcal F}\Vert l(f)\Vert_\infty\sqrt{\frac{2\log(2/\epsilon)}{N}},
     \end{equation}
     where $l\circ \mathcal{F}:=\{l(f)|f\in\mathcal{F}\}$.
\end{lemma}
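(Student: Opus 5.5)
The statement is the standard uniform-convergence bound via Rademacher complexity, and I would prove it by the usual two-step route: a concentration argument around the expectation, followed by symmetrization. Write $B:=\sup_{l(f)\in l\circ\mathcal F}\Vert l(f)\Vert_\infty$ and $S=\{\mathbf x_1,\dots,\mathbf x_N\}$. Introduce the random variable
\[
\Phi(S):=\sup_{f\in\mathcal F}\bigl(L_\mathcal{X}(f)-L_N(f)\bigr).
\]
I would prove the bound for $\Phi(S)$ itself. This gives the claim simultaneously for every $f\in\mathcal F$, which is stronger than the pointwise statement.

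\emph{Step 1 (concentration).} Replacing a single sample $\mathbf x_i$ by $\mathbf x_i'$ changes each $L_N(f)$ by $\frac{1}{N}|l(f(\mathbf x_i))-l(f(\mathbf x_i'))|\leq 2B/N$. Taking the supremum over $f$ preserves this, so $\Phi$ has bounded differences $c_i=2B/N$. McDiarmid's inequality then gives
\[
\mathbb P\bigl(\Phi(S)-\mathbb E_S\Phi(S)\geq t\bigr)\leq \exp\!\Bigl(-\tfrac{2t^2}{\sum_i c_i^2}\Bigr)=\exp\!\Bigl(-\tfrac{Nt^2}{2B^2}\Bigr).
\]
Setting the right-hand side equal to $\epsilon$ yields, with probability at least $1-\epsilon$,
\[
\Phi(S)\leq \mathbb E_S\Phi(S)+B\sqrt{2\log(1/\epsilon)/N}\leq \mathbb E_S\Phi(S)+B\sqrt{2\log(2/\epsilon)/N}.
\]
The weaker $\log(2/\epsilon)$ in the statement leaves room to spare. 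This room would also absorb a second union-bound event if one preferred to work with the empirical complexity $\hat R_N$ instead of $R_N$.

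\emph{Step 2 (symmetrization).} It remains to show $\mathbb E_S\Phi(S)\leq 2R_N(l\circ\mathcal F)$. Draw an independent ghost sample $S'=\{\mathbf x_1',\dots,\mathbf x_N'\}$ from $\mathcal X$, so that $L_\mathcal X(f)=\mathbb E_{S'}\frac1N\sum_i l(f(\mathbf x_i'))$. Pulling the supremum outside the expectation (Jensen) gives
\[
\mathbb E_S\Phi\leq \mathbb E_{S,S'}\sup_f\frac1N\sum_i\bigl(l(f(\mathbf x_i'))-l(f(\mathbf x_i))\bigr).
\]
Because $\mathbf x_i$ and $\mathbf x_i'$ are exchangeable, inserting independent Rademacher signs $\varepsilon_i$ in front of each difference does not change the distribution. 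Splitting the supremum of the difference into the sum of two suprema then gives
\[
\mathbb E_S\Phi\leq \mathbb E_{S'}\mathbb E_\varepsilon\sup_f\frac1N\sum_i\varepsilon_i l(f(\mathbf x_i'))+\mathbb E_{S}\mathbb E_\varepsilon\sup_f\frac1N\sum_i(-\varepsilon_i) l(f(\mathbf x_i)).
\]
Since $-\varepsilon_i$ has the same law as $\varepsilon_i$, each term equals $R_N(l\circ\mathcal F)$, so $\mathbb E_S\Phi\leq 2R_N(l\circ\mathcal F)$. Combining this with Step 1 gives the lemma.

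\emph{Main obstacle.} Step 2 is routine. The delicate point is Step 1: the bounded-difference constant must come out so that the deviation term is exactly $B\sqrt{2\log(\cdot)/N}$ with the paper's normalization of $B$ as a sup-norm. I would first check the computation with $c_i=2B/N$, as above. If the loss is nonnegative, as in the application to the empirical loss in (\ref{eq:empirical_loss}), then $c_i=B/N$ already suffices and the constant improves. I would also verify measurability of the supremum, for instance by restricting to a countable dense subclass, which is harmless for the parametrized two-layer networks used in Theorem \ref{theorem:generalization_error}.
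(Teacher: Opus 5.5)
Your proposal is correct. It is essentially the standard argument behind the result the paper merely cites, \cite[Theorem 26.5]{Shalev-Shwartz_Ben-David_2014}: McDiarmid's inequality applied to $\sup_{f\in\mathcal F}\bigl(L_\mathcal{X}(f)-L_N(f)\bigr)$ with bounded-difference constant $2B/N$, combined with ghost-sample symmetrization for the $2R_N(l\circ\mathcal F)$ term. (The book uses the two-sided form of McDiarmid, which is where its $\log(2/\epsilon)$ comes from.) Proving the bound uniformly over $\mathcal F$, as you do, is also exactly what is needed, since Theorem~\ref{theorem:generalization_error} applies the lemma to the data-dependent $\theta^*$.
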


\section{Proofs}
\subsection{Proof of Theorem \ref{theorem:generalization_error}} \label{sec:generalization_error_proof}
\begin{proof}
    We first construct a set of parameters $\hat{\theta}\in\Theta(C_n)$ as
$$ \hat{\theta}=\left(\Vert A_n^{-1}\Vert_2,-\Vert A_n^{-1}\Vert_2,\frac{A_n^{-1}}{\Vert A_n^{-1}\Vert_2},-\frac{A_n^{-1}}{\Vert A_n^{-1}\Vert_2}\right).$$
Then, we have
$$ \mathcal{N}_{\hat{\theta}}(\mathbf y)=\Vert A_n^{-1}\Vert_2\text{ReLU}\left(\frac{A_n^{-1}}{\Vert A_n^{-1}\Vert_2}\mathbf y\right)-\Vert A_n^{-1}\Vert_2\text{ReLU}\left(\frac{-A_n^{-1}}{\Vert A_n^{-1}\Vert_2}\mathbf y\right)=A_n^{-1}\mathbf y $$
for any $\mathbf y\in\mathbb{R}^n$, which implies that $\mathcal{N}_{\hat{\theta}}$ exactly recover the operator $A_n^{-1}$ and $\mathcal{L}_{\rm{exp}}(\hat{\theta})=\mathcal{L}_{\rm{emp}}^N(\hat{\theta})=0$. 

The generalization errors of $\mathcal N_{\theta^*}$ is bounded as follows:
\begin{align*}
   &\Vert \mathcal{N}_{\theta^*}-A_n^{-1}\Vert^2_{A_n,\mu} = \mathbb{E}_{\mathbf y\sim\mu}(\Vert A_n^{1/2}(\mathcal{N}_{\theta^*}(\mathbf y)-A_n^{-1}\mathbf y)\Vert^2_2)\\
    &=\mathbb{E}_{\mathbf y\sim\mu}(\Vert A_n^{1/2}P_n^{-1}P_n(\mathcal{N}_{\theta^*}(\mathbf y)-A_n^{-1}\mathbf y)\Vert^2_2)\\
    &\leq \Vert A_nP_n^{-2}\Vert_2(\mathcal{L}_{\rm{exp}}(\theta^*)-\mathcal{L}_{\rm{emp}}^N(\theta^*)+\mathcal{L}_{\rm{emp}}^N(\theta^*))\\
    &\leq \Vert A_nP_n^{-2}\Vert_2(\mathcal{L}_{\rm{exp}}(\theta^*)-\mathcal{L}_{\rm{emp}}^N(\theta^*)+\mathcal{L}_{\rm{emp}}^N(\hat{\theta})+\delta)
\end{align*}
Here, $\mathcal{L}_{\rm{emp}}^N(\hat{\theta})=0$ and $\mathcal{L}_{\rm{exp}}(\theta^*)-\mathcal{L}_{\rm{emp}}^N(\theta^*)$ is bounded by Lemma \ref{lemma:quadrature_error}. We define the function class $\mathcal{H}:=\{h(\mathbf y)=\Vert P_n(\mathcal{N}_\theta(\mathbf y)-A_n^{-1}\mathbf y)\Vert^2_2|\theta\in\Theta(C_n)\}$. By Lemma \ref{lemma:quadrature_error}, the error $\mathcal{L}_{\rm{exp}}(\theta^*)-\mathcal{L}_{\rm{emp}}^N(\theta^*)$ is bounded by $2R_N(\mathcal{H})+\sup_{h\in\mathcal{H}}\Vert h\Vert_\infty\sqrt{\frac{2\log(2/\epsilon)}{N}}$ with probability at least $1-\epsilon$. Consequently, the generalization error is bounded, with probability at least $1-\epsilon$, by
\begin{equation}
    \Vert \mathcal{N}_{\theta^*}-A_n^{-1}\mathbf y\Vert^2_{A_n,\mu} \leq \Vert A_nP_n^{-2}\Vert_2(2R_N(\mathcal{H})+\sup_{h\in\mathcal{H}}\Vert h\Vert_\infty\sqrt{\frac{2\log(2/\epsilon)}{N}}+\delta). \label{eq:generalization_error_bound}
\end{equation} 
For every $h\in\mathcal{H}$ and $\mathbf y\in\mathbb{B}(\mathbb{R}^d)$, 
$$|h(\mathbf y)|\leq\Vert P_n\Vert_2^2\Vert c_1\text{ReLU}(W_1\mathbf y)+c_1\text{ReLU}(W_2\mathbf y)-A_n^{-1}\mathbf y\Vert^2_2\leq 9\Vert P_n\Vert_2^2C_n^2.$$
What remains is bounding the complexity $R_N(\mathcal{H})$. Notice that
\begin{align}
    &\Vert P_n(\mathcal{N}_\theta(y)-A_n^{-1}\mathbf y)\Vert^2_2
    =\sum_{j=1}^n \left(\sum_{k=1}^nP_n[j,k](\mathcal{N}(\mathbf y)_\theta-A_n^{-1}\mathbf y)[k]\right)^2 \notag\\
    &=\sum_{j=1}^n \left(\sum_{k=1}^n\sum_{l=1}^n P_n[j,k]P_n[j,l](\mathcal{N}_\theta(\mathbf y)-A_n^{-1}\mathbf y)[k](\mathcal{N}_\theta(\mathbf y)-A_n^{-1}\mathbf y)[l]\right).\label{eq:rad_comp_est}
\end{align}
Here, $[\cdot]$ denotes the indices of a matrix or vector. We define another class of functions $\mathcal{G}$ as 
\begin{align*}
    \bigg\{g(\mathbf y)=c_1\text{ReLU}(\mathbf w_1^\top \mathbf y)+c_2\text{ReLU}(\mathbf w_2^\top \mathbf y)-\mathbf w_3^\top \mathbf y\bigg||c_j|\leq C_n, \Vert \mathbf w_j\Vert_2\leq 1,j=1,2;\Vert\mathbf w_3\Vert_2\leq \Vert A_n^{-1}\Vert_2\bigg\}.    
\end{align*} 
Then, each $(\mathcal{N}_\theta(\mathbf y)-A_n^{-1}\mathbf y)[k]$, $k=1,\dots,n$, belongs to $\mathcal{G}$, and by (\ref{eq:rad_comp_est}), 
\begin{equation*}
    \mathcal H\subset \sum_{j=1}^n \left(\sum_{k=1}^n\sum_{l=1}^n P_n[j,k]P_n[j,l](\mathcal G\times\mathcal G)\right).
\end{equation*}
For any $g\in\mathcal{G}$ and $\mathbf y\sim\mu$, we have $|g(\mathbf y)|\leq 3C_n$. Then, using Lemma \ref{lem::computing_rules},
\begin{align}
R_N(\mathcal{H})&\leq\sum_{j=1}^n \left(\sum_{k=1}^n\sum_{l=1}^n P_n[j,k]P_n[j,l]R_N(\mathcal G\times\mathcal G)\right) & (\text{By Lemma \ref{lem::computing_rules}(1-3)}) \notag\\
&\leq\sum_{j=1}^n \left(\sum_{k=1}^n\sum_{l=1}^n \left(\frac{1}{2}P_n[j,k]^2+\frac{1}{2}P_n[j,l]^2\right)R_N(\mathcal G\times\mathcal G)\right) & \notag\\
&\leq n\Vert P_n\Vert_{F}^2(R_N(\mathcal{G}\times\mathcal{G})) & \notag\\
& \leq 18nC_n\Vert P_n\Vert_{F}^2R_N(\mathcal{G}) & (\text{By Lemma \ref{lem::computing_rules}(6)}) \label{eq:H_decomposition}
\end{align}
We further estimate $R_N(\mathcal{G})$ by $R_N(\mathcal{G})\leq 2R_N(\mathcal{G}_1)+R_N(\mathcal{G}_2)$.
Here $\mathcal{G}_1$ is defined as $\{a\text{ReLU}(\mathbf w^\top \mathbf y)|\Vert \mathbf w\Vert_2\leq 1,|a|\leq C_n\}$ and $\mathcal{G}_2$ is defined as $\{\mathbf w^\top \mathbf y|\Vert \mathbf w\Vert_2\leq C_n \}$. 
Then, we directly apply Lemma \ref{lem:linear_trans} and Lemma \ref{lem:relu_complexity} to the above inequality, and get
$$ R_N(\mathcal{G})\leq 3C_n\frac{\sup_{\mathbf y\sim\mu} \Vert \mathbf y\Vert_2}{\sqrt{N}}\leq \frac{3C_n}{\sqrt{N}}.$$
Then, by applying the above estimate of $R_N(\mathcal G)$ to (\ref{eq:H_decomposition}), we have
\begin{equation}
    R_N(\mathcal{H})\leq 54nC_n^2\Vert P_n\Vert_{F}^2/\sqrt{N}. \label{eq:RH_estimate}
\end{equation}
Finally, by combining (\ref{eq:RH_estimate}) and (\ref{eq:generalization_error_bound}), we have
\begin{align*}
    \Vert \mathcal{N}_{\theta^*}-A_n^{-1}\Vert^2_{A_n,\mu} \leq &\Vert A_nP_n^{-2}\Vert_2\left(2R_N(\mathcal{H})+\sup_{h\in\mathcal{H}}\Vert h\Vert_\infty\sqrt{\frac{2\log(2/\epsilon)}{N}}+\delta\right)\\
    \leq &\Vert A_nP_n^{-2}\Vert_2\left( \frac{108nC_n^2\Vert P_n\Vert_{F}^2}{\sqrt{N}}+9C_n^2\Vert P_n\Vert_2^2\sqrt{\frac{2\log(2/\epsilon)}{N}}+\delta\right)
\end{align*}
with probability at least $1-\epsilon$.

\end{proof}

\subsection{proof of Theorem \ref{theorem:error_estimate}} \label{sec:proof_error_estimate}
\begin{proof}
    Following Algorithm \ref{alg:hybrid_iterative}, $\mathbf x_{\tau+1}$ is calculated as 
    $$ 
        \mathbf r=\frac{\mathbf y-A_n\mathbf x_\tau}{\Vert \mathbf y-A_n\mathbf x_\tau\Vert_2};\quad
        \hat{\mathbf x}_\tau=\mathcal{N}_\theta(\mathbf r)\Vert \mathbf y-A_n\mathbf x_\tau\Vert_2+\mathbf x_\tau;\quad
        \mathbf x_{\tau+1}=\mathcal{S}(A_n,\mathbf y,\hat{\mathbf x}_\tau,1)
        $$
    Then, we have
    \begin{align*}
    &\Vert \mathbf x_{\tau+1}-A_n^{-1}\mathbf y\Vert^2_{A_n}=\Vert S_n(\hat{\mathbf x}_{\tau}-A_n^{-1}\mathbf y) \Vert^2_{A_n}\\
    &\leq \Vert \hat{\mathbf x}_{\tau}-A_n^{-1}\mathbf y\Vert^2_{A_n}-\frac{\delta_1}{a_0}\Vert \hat{\mathbf x}_{\tau}-A_n^{-1}\mathbf y\Vert^2_{A_n^2}\\
    &=\Vert \mathbf y-A_n\mathbf x_\tau\Vert_2^2\left(\Vert \mathcal{N}_\theta(\mathbf r)-A_n^{-1}\mathbf r\Vert^2_{A_n}-\frac{\delta_1}{a_0}\Vert \mathcal{N}_\theta(\mathbf r)-A_n^{-1}\mathbf r\Vert^2_{A_n^2}\right)\\
    &\leq (1-\frac{\delta_1}{\delta_2})\Vert\mathbf  y-A_n\mathbf x_\tau\Vert_2^2\Vert \mathcal{N}_\theta(\mathbf r)-A_n^{-1}\mathbf r\Vert^2_{A_n}\\
    &\leq (1-\frac{\delta_1}{\delta_2})\delta_3\Vert \mathbf y-A_n\mathbf x_\tau\Vert_2^2\Vert A_n^{-1}\mathbf r\Vert^2_{A_n}=(1-\frac{\delta_1}{\delta_2})\delta_3\Vert \mathbf x_\tau-A_n^{-1}\mathbf y\Vert^2_{A_n}
\end{align*}
\end{proof}

\subsection{Proof of Corollary \ref{cor_stability}} \label{sec:proof_stability}
\begin{proof}
Since $\Vert \mathbf r\Vert_2=1$, we have
$$\Vert A_n^{-1}\mathbf r\Vert_{A_n}\geq (\lambda_{\min}(A_n^{-1})\Vert \mathbf r\Vert_2^2)^{1/2}= \frac{1}{\lambda_{\max}(A_n)^{1/2}}=\frac{1}{\Vert A_n\Vert_2^{1/2}}.$$
Then, 
\begin{align}
    &\text{Prob}\left(\Vert \mathcal{N}_{\theta^*}(\mathbf r)-A_n^{-1}\mathbf r \Vert^2_{A_n}\geq \delta_3\Vert A_n^{-1}\mathbf r\Vert^2_{A_n}\right)\notag\\
    &\leq\text{Prob}\left(\Vert \mathcal{N}_{\theta^*}(\mathbf r)-A_n^{-1}\mathbf r \Vert^2_{A_n}\geq \frac{\delta_3}{\Vert A_n\Vert_2}\right)\notag\\
    \leq & \frac{\Vert A_nP_n^{-2}\Vert_2\Vert A_n\Vert_2}{\delta_3}\left( \frac{108nC_n^2\Vert P_n\Vert_{F}^2}{\sqrt{N}}+9C_n^2\Vert P_n\Vert_2^2\sqrt{\frac{2\log(4/\epsilon)}{N}}+\delta\right)\label{eq:re1}.
\end{align}
The last inequality is derived using Markov's inequality \cite{lin2010probability} and the generalization error estimation in Theorem \ref{theorem:generalization_error}, and it holds with probability at least $1-\epsilon/2$.
Therefore, by choosing sufficiently large $N$ and solving the empirical loss function sufficiently well such that $\delta$ is small enough, we can have
$$ \text{Prob}(\Vert \mathcal{N}_{\theta^*}(\mathbf r)-A_n^{-1}\mathbf r\Vert^2_{A_n}\geq \delta_3\Vert A_n^{-1}\mathbf r\Vert^2_{A_n})\leq \epsilon/2, $$
indicating that
\begin{equation}
    \Vert \mathcal{N}_{\theta^*}(\mathbf r)-A_n^{-1}\mathbf r\Vert^2_{A_n}\leq \delta_3\Vert A_n^{-1}\mathbf r\Vert^2_{A_n}\label{eq:re2}
\end{equation}
with probability at least $1-\epsilon/2$.
The inequality $\Vert \mathcal{N}_{\theta^*}(\mathbf r)-A_n^{-1}\mathbf r\Vert^2_{A_n}\leq \delta_3\Vert A_n^{-1}\mathbf r\Vert^2_{A_n}$ hold if both (\ref{eq:re1}) and (\ref{eq:re2}) hold simultaneously. By \cite{kwerel1975bounds}, we have 
\[\text{Prob}(\Vert \mathcal{N}_{\theta^*}(\mathbf r)-A_n^{-1}\mathbf r\Vert^2_{A_n}\leq \delta_3\Vert A_n^{-1}\mathbf r\Vert^2_{A_n})\geq (1-\epsilon/2)+(1-\epsilon/2)-1=1-\epsilon.\]
\end{proof}

\section{Network structures} \label{sec:network_structure}
In Section \ref{sec:exp}, we adopt the Fourier neural operators (FNO) \cite{li2021fourier}. The general structure is shown in Figure \ref{fig:FNO}. Here, the first fully connected layer $L_1$ aims to lift the input dimension to $\mathbb{R}^h$ where $h$ is the hidden dimension specified by users. The last fully connected layer $L_2$ aims to change the output dimension back to $\mathbb{R}^1$ or $\mathbb{R}^2$. The Fourier layer is introduced in \cite{li2021fourier} and is defined as
$\sigma(\text{FFT}^{-1}\circ R_l\circ\text{FFT}(\mathbf z)+W_l\mathbf z+\mathbf b),\ l=1,\dots,L,$
where $\sigma$ is a nonlinear activation function, $R_l$ is a complex linear transformation applied only to the first $k$ low-frequency components in each dimension, $W_l$ is a linear transformation, and $\mathbf b$ is a bias vector. More details of the implementation of the FNO can be found in \cite{li2021fourier}.
Here, $W_l$ is implemented as a convolution operator with learnable convolutional kernels. The hyperparameters of networks used for Section \ref{sec:exp} are listed in Table \ref{tab:network_strucutres}.

\begin{table}[h]
\centering
\begin{tabular}{|c|c|cccc|}
\hline
problem dimension & problem size & $h$ & $L$ & $k$ & kernel size\\ \hline
\multirow{3}{*}{$d=1$}                  & $n=256$        &  64   &  6   &  64   &     7                    \\
                                      & $n=512$        & 64    &  6  &  128   &  7                           \\
                                      & $n=1024$       & 64    &  6   & 128    &   7                         \\ \hline
\multirow{3}{*}{$d=2$}                  & $n=256$        &  64   &  5   &   32  &      5                   \\
                                      & $n=512$        &  64   &   6  & 64    &    7                         \\
                                      & $n=1024$       &  64   &  6   &  64   &    7                  \\\hline  
\end{tabular}
\caption{Hyperparameters in neural operators}
\label{tab:network_strucutres}
\end{table}

\begin{figure}
    \centering
    \includegraphics[width=1.\linewidth]{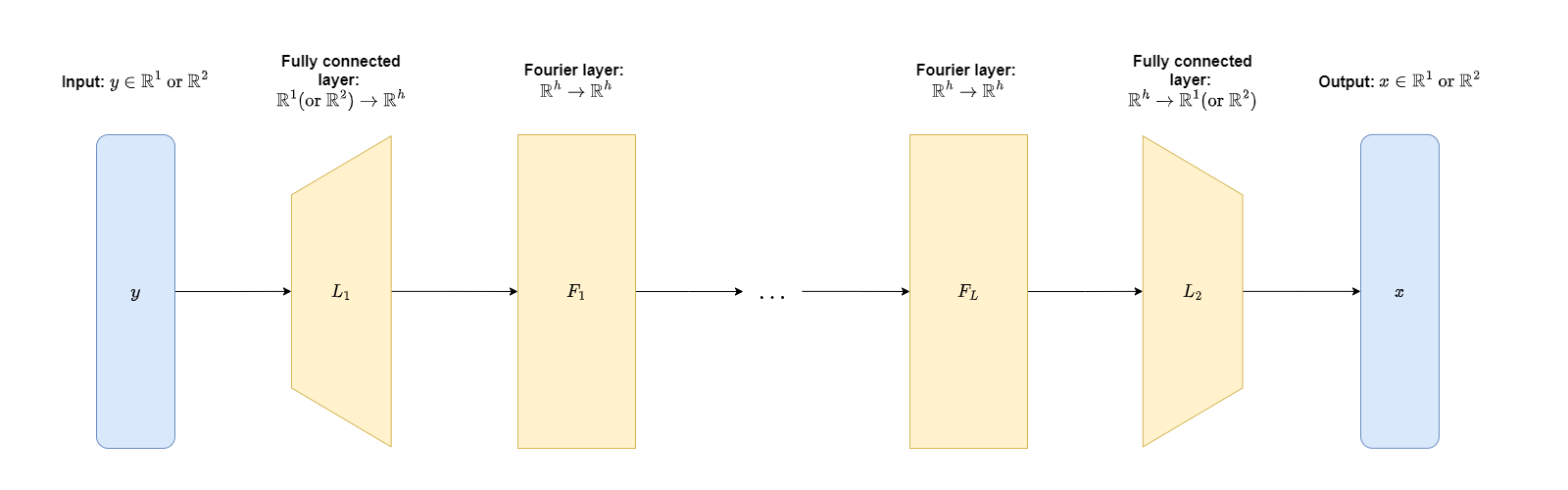}
    \caption{The structure of Fourier neural operators}
    \label{fig:FNO}
\end{figure}

\section{Table \ref{tab:condNum}: Condition number of linear systems used in Section \ref{sec:exp}}
\begin{table}[]
\centering
\begin{tabular}{|>{\centering}p{0.22\textwidth}|>{\centering}p{0.15\textwidth}|>{\centering}p{0.15\textwidth}|>{\centering}p{0.15\textwidth}|>{\centering}p{0.15\textwidth}|}
\hline
 & $\alpha$ &  $n=256$ & $n=512$ & $n=1024$ \tabularnewline \hline
 \multirow{3}{*}{\makecell{Section \ref{sec:exp_1d_tik}}}& $10^{-3}$ & 971    &  972   & 972    \tabularnewline \cline{2-5}
 & $10^{-4}$ & 7678    &   7683  & 7685    \tabularnewline \cline{2-5}
 & $10^{-5}$ & 24849    & 24904    & 24918    \tabularnewline \hline
 \multirow{3}{*}{\makecell{Section \ref{sec:exp_1d_ani}}}& $10^{-3}$ & 488    &  490   & 492    \tabularnewline \cline{2-5}
 & $10^{-4}$ & 4225    &   4286  & 4316    \tabularnewline \cline{2-5}
 & $10^{-5}$ & 19159    & 19546    & 19746    \tabularnewline \hline
 \multirow{3}{*}{\makecell{Section \ref{sec:exp_2d}}} & $10^{-3}$ & 999.7    & 999.9 & 1000.0    \tabularnewline \cline{2-5}
 & $10^{-4}$ & 9996.5    &   9999.1  & 9999.7    \tabularnewline \cline{2-5}
 & $10^{-5}$ & 99957.2    & 99982.5    & 99988.8    \tabularnewline \hline  
\end{tabular}
\caption{Condition number of linear systems corresponding to different choices of $\alpha$ and $n$ in Section \ref{sec:exp}}
\label{tab:condNum}
\end{table}

\end{document}

%% file: ex_shared.tex
\usepackage{lipsum}
\usepackage{amsmath,amsfonts,amssymb}
\usepackage{bbm}
\usepackage{stackengine}
\usepackage{algorithm}
\usepackage{algpseudocode}
\usepackage{listings}
\usepackage{booktabs}
\usepackage{epsfig}
\usepackage{color} 
\usepackage{graphicx}
\usepackage{subfigure}
\usepackage{float}
\usepackage{hyperref}
\usepackage{footnote}
\usepackage{mathrsfs}
\usepackage{booktabs}
\usepackage{listings}
\usepackage{multirow}
\usepackage{caption}
\usepackage[section]{placeins}
\usepackage{tabularx}
\usepackage{multirow}
\usepackage{makecell}
\usepackage{array}
\usepackage{epstopdf}
\ifpdf
  \DeclareGraphicsExtensions{.eps,.pdf,.png,.jpg}
\else
  \DeclareGraphicsExtensions{.eps}
\fi

\newsiamremark{remark}{Remark}
\newsiamremark{hypothesis}{Hypothesis}
\crefname{hypothesis}{Hypothesis}{Hypotheses}
\newsiamthm{claim}{Claim}

\headers{R. Chan and L. Li}{}

\title{Solving Convolution-type Integral Equations using Preconditioned Neural Operators\thanks{Submitted to the editors DATE.
\funding{This work of R. Chan is partially supported by HKRGC Grants No.  CityU11309922, LU13300125, ITF Grant No. MHP/054/22,  and LU BGR 105824.
The work of Lingfeng Li is supported by the InnoHK project at the Hong Kong Centre for Cerebro-cardiovascular Health Engineering (COCHE) and  HKRGC Grants No.LU13300125.}}}

\author{Raymond Hon-Fu Chan\thanks{Department of Data Science and Department of Operations and Risk Management, Lingnan University, Hong Kong; Hong Kong Centre for Cerebro-Cardiovascular Health Engineering, Hong Kong
  (\email{raymond.chanln.edu.hk}).}
\and Lingfeng Li\thanks{Hetao Institute of Mathematics and Interdisciplinary Sciences (Shenzhen), Shenzhen, China 
  (\email{lilingfeng@himis-sz.cn}).}}

\usepackage{amsopn}

\makeatletter
\newcommand*{\addFileDependency}[1]{
  \typeout{(#1)}
  \@addtofilelist{#1}
  \IfFileExists{#1}{}{\typeout{No file #1.}}
}
\makeatother

\newcommand*{\myexternaldocument}[1]{%
    \externaldocument{#1}%
    \addFileDependency{#1.tex}%
    \addFileDependency{#1.aux}%
}

%% file: bibfile.bib
@article{chan1994circulant,
  title={Circulant preconditioners for {T}oeplitz-block matrices},
  author={Chan, Tony F and Olkin, Julia A},
  journal={Numerical Algorithms},
  volume={6},
  number={1},
  pages={89--101},
  year={1994},
  publisher={Springer}
}

@article{chan1988optimal,
  title={An optimal circulant preconditioner for {T}oeplitz systems},
  author={Chan, Tony F},
  journal={SIAM Journal on Scientific and Statistical Computing},
  volume={9},
  number={4},
  pages={766--771},
  year={1988},
  publisher={SIAM}
}

@incollection{ruge1987algebraic,
  title={Algebraic multigrid},
  author={Ruge, John W and St{\"u}ben, Klaus},
  booktitle={Multigrid Methods},
  pages={73--130},
  year={1987},
  publisher={SIAM}
}

@article{xu2017algebraic,
  title={Algebraic multigrid methods},
  author={Xu, Jinchao and Zikatanov, Ludmil},
  journal={Acta Numerica},
  volume={26},
  pages={591--721},
  year={2017},
  publisher={Cambridge University Press}
}

@book{Shalev-Shwartz_Ben-David_2014, 
place={Cambridge}, 
title={Understanding Machine Learning: From Theory to Algorithms}, publisher={Cambridge University Press}, 
author={Shalev-Shwartz, Shai and Ben-David, Shai}, 
year={2014}}

@article{chui1982application,
  title={Application of approximation theory methods to recursive digital filter design},
  author={Chui, C and Chan, A},
  journal={IEEE Transactions on Acoustics, Speech, and Signal Processing},
  volume={30},
  number={1},
  pages={18--24},
  year={1982},
  publisher={IEEE}
}

@article{grenander1958toeplitz,
  title={Toeplitz Forms and Their Applications},
  author={Grenander, U},
  journal={California Monographs in Mathematical Sciences/University of California Press},
  year={1958}
}

@book{king1989digital,
  title={Digital filtering in one and two dimensions: design and applications},
  author={King, Robert and Ahmadi, Majid and Gorgui-Naguib, Raouf and Kwabwe, Alan and Azimi-Sadjadi, Mahmood},
  year={1989},
  publisher={Springer}
}

@article{tyrtyshnikov1992optimal,
  title={Optimal and superoptimal circulant preconditioners},
  author={Tyrtyshnikov, Evgenij E},
  journal={SIAM Journal on Matrix Analysis and Applications},
  volume={13},
  number={2},
  pages={459--473},
  year={1992},
  publisher={SIAM}
}

@article{hu2024hybrid,
  title={A hybrid iterative method based on {MIONet} for PDEs: Theory and numerical examples},
  author={Hu, Jun and Jin, Pengzhan},
  journal={arXiv preprint arXiv:2402.07156},
  year={2024}
}

@article{zhang2022hybrid,
  title={A hybrid iterative numerical transferable solver ({HINTS}) for {PDEs} based on deep operator network and relaxation methods},
  author={Zhang, Enrui and Kahana, Adar and Turkel, Eli and Ranade, Rishikesh and Pathak, Jay and Karniadakis, George Em},
  journal={arXiv preprint arXiv:2208.13273},
  year={2022}
}

@article{lu2021learning,
  title={Learning nonlinear operators via {DeepONet} based on the universal approximation theorem of operators},
  author={Lu, Lu and Jin, Pengzhan and Pang, Guofei and Zhang, Zhongqiang and Karniadakis, George Em},
  journal={Nature Machine Intelligence},
  volume={3},
  number={3},
  pages={218--229},
  year={2021},
  publisher={Nature Publishing Group UK London}
}

@article{xu2020frequency,
  title={Frequency Principle: Fourier Analysis Sheds Light on Deep Neural Networks},
  author={Xu, Zhi-Qin John},
  journal={Communications in Computational Physics},
  volume={28},
  number={5},
  pages={1746--1767},
  year={2020}
}

@inproceedings{
li2021fourier,
title={Fourier Neural Operator for Parametric Partial Differential Equations},
author={Zongyi Li and Nikola Borislavov Kovachki and Kamyar Azizzadenesheli and Burigede liu and Kaushik Bhattacharya and Andrew Stuart and Anima Anandkumar},
booktitle={International Conference on Learning Representations},
year={2021},
url={https://openreview.net/forum?id=c8P9NQVtmnO}
}

@article{huang2022learning,
  title={Learning optimal multigrid smoothers via neural networks},
  author={Huang, Ru and Li, Ruipeng and Xi, Yuanzhe},
  journal={SIAM Journal on Scientific Computing},
  volume={45},
  number={3},
  pages={S199--S225},
  year={2022},
  publisher={SIAM}
}

@inproceedings{chan1997multigrid,
  title={Multigrid for Differential-Convolution Problems Arising in Image Processing},
  author={Chan, Raymond and Chan, Tong F and Wan, WL},
  booktitle={6th Workshop on Scientific Computing},
  pages={58--72},
  year={1997},
  organization={Springer-Verlag}
}

@article{ammar1988superfast,
  title={Superfast solution of real positive definite {Toeplitz} systems},
  author={Ammar, Gregory S and Gragg, William B},
  journal={SIAM Journal on Matrix Analysis and Applications},
  volume={9},
  number={1},
  pages={61--76},
  year={1988},
  publisher={SIAM}
}

@article{bitmead1980asymptotically,
  title={Asymptotically fast solution of {Toeplitz} and related systems of linear equations},
  author={Bitmead, Robert R and Anderson, Brian DO},
  journal={Linear Algebra and its Applications},
  volume={34},
  pages={103--116},
  year={1980},
  publisher={Elsevier}
}

@article{chan1992circulant,
  title={Circulant preconditioners constructed from kernels},
  author={Chan, Raymond H and Yeung, Man-Chung},
  journal={SIAM Journal on Numerical Analysis},
  volume={29},
  number={4},
  pages={1093--1103},
  year={1992},
  publisher={SIAM}
}

@article{sun1997note,
  title={A note on the convergence of the two-grid method for {Toeplitz} systems},
  author={Sun, Hai-Wei and Chan, Raymond H and Chang, Qian-Shun},
  journal={Computers \& Mathematics with Applications},
  volume={34},
  number={1},
  pages={11--18},
  year={1997},
  publisher={Elsevier}
}

@article{wang2021learning,
  title={Learning the solution operator of parametric partial differential equations with physics-informed {DeepONets}},
  author={Wang, Sifan and Wang, Hanwen and Perdikaris, Paris},
  journal={Science Advances},
  volume={7},
  number={40},
  pages={eabi8605},
  year={2021},
  publisher={American Association for the Advancement of Science}
}

@article{chan1996conjugate,
  title={Conjugate gradient methods for {Toeplitz} systems},
  author={Chan, Raymond H and Ng, Michael K},
  journal={SIAM Review},
  volume={38},
  number={3},
  pages={427--482},
  year={1996},
  publisher={SIAM}
}

@article{chan1989spectrum,
  title={The spectrum of a family of circulant preconditioned {Toeplitz} systems},
  author={Chan, Raymond H},
  journal={SIAM Journal on Numerical Analysis},
  volume={26},
  number={2},
  pages={503--506},
  year={1989},
  publisher={SIAM}
}

@inproceedings{he2016deep,
  title={Deep residual learning for image recognition},
  author={He, Kaiming and Zhang, Xiangyu and Ren, Shaoqing and Sun, Jian},
  booktitle={Procedings of IEEE/CVF Computer Vision and Pattern Recognition Conference},
  pages={},
  year={770--778, 2016}
}

@inproceedings{ronneberger2015u,
  title={{U-Net}: Convolutional networks for biomedical image segmentation},
  author={Ronneberger, Olaf and Fischer, Philipp and Brox, Thomas},
  booktitle={Procedings of Medical Image Computing and Computer Assisted Intervention},
  pages={},
  year={234--241, 2015},
}

@article{Zhang2024BlendingNO,
  title={Blending neural operators and relaxation methods in {PDE} numerical solvers},
  author={Enrui Zhang and Adar Kahana and Eli Turkel and Rishikesh Ranade and Jay Pathak and George Em Karniadakis},
  journal={Nature Machine Intelligence},
  year={2024},
  url={https://doi.org/10.1038/s42256-024-00910-x}
}

@article{li2024physics,
author = {Li, Zongyi and Zheng, Hongkai and Kovachki, Nikola and Jin, David and Chen, Haoxuan and Liu, Burigede and Azizzadenesheli, Kamyar and Anandkumar, Anima},
title = {Physics-Informed Neural Operator for Learning Partial Differential Equations},
journal={ACM/JMS Journal of Data Science},
year = {2024},
issue_date = {September 2024},
publisher = {Association for Computing Machinery},
address = {New York, NY, USA},
volume = {1},
number = {3},
url = {https://doi.org/10.1145/3648506},
doi = {10.1145/3648506}}

@article{goswami2022physics,
  title={A physics-informed variational DeepONet for predicting crack path in quasi-brittle materials},
  author={Goswami, Somdatta and Yin, Minglang and Yu, Yue and Karniadakis, George Em},
  journal={Computer Methods in Applied Mechanics and Engineering},
  volume={391},
  pages={114587},
  year={2022},
  publisher={Elsevier}
}

@book{ledoux2013probability,
  title={Probability in {Banach} Spaces: Isoperimetry and processes},
  author={Ledoux, Michel and Talagrand, Michel},
  year={2013},
  publisher={Springer Science \& Business Media}
}

@book{chan1990circulant,
  title={Circulant preconditioners for elliptic problems},
  author={Chan, Raymond H and Chan, Tony F},
  year={1990},
  publisher={Department of Mathematics, University of California, Los Angeles}
}

@article{song2025physics,
  title={Physics-informed multi-grid neural operator: theory and an application to porous flow simulation},
  author={Song, Suihong and Mukerji, Tapan and Zhang, Dongxiao},
  journal={Journal of Computational Physics},
  volume={520},
  pages={113438},
  year={2025},
  publisher={Elsevier}
}

@book{lin2010probability,
  title={Probability Inequalities},
  author={Lin, Zhengyan},
  year={2010},
  publisher={Springer}
}

@article{chan1992family,
  title={A family of block preconditioners for block systems},
  author={Chan, Raymond H and Jin, Xiao-Qing},
  journal={SIAM Journal on Scientific and Statistical Computing},
  volume={13},
  number={5},
  pages={1218--1235},
  year={1992},
  publisher={SIAM}
}

@article{beck2009fast,
  title={A fast iterative shrinkage-thresholding algorithm for linear inverse problems},
  author={Beck, Amir and Teboulle, Marc},
  journal={SIAM Journal on Imaging Sciences},
  volume={2},
  number={1},
  pages={183--202},
  year={2009},
  publisher={SIAM}
}

@book{bergheau2013finite,
  title={Finite element simulation of heat transfer},
  author={Bergheau, Jean-Michel and Fortunier, Roland},
  year={2013},
  publisher={John Wiley \& Sons}
}

@inproceedings{rahaman2019spectral,
  title={On the spectral bias of neural networks},
  author={Rahaman, Nasim and Baratin, Aristide and Arpit, Devansh and Draxler, Felix and Lin, Min and Hamprecht, Fred and Bengio, Yoshua and Courville, Aaron},
  booktitle={International conference on machine learning},
  pages={5301--5310},
  year={2019},
  organization={PMLR}
}

@inproceedings{cao2021towards,
  title={Towards Understanding the Spectral Bias of Deep Learning},
  author={Cao, Yuan and Fang, Zhiying and Wu, Yue and Zhou, Ding-Xuan and Gu, Quanquan},
  booktitle={30th International Joint Conference on Artificial Intelligence (IJCAI 2021)},
  pages={2205--2211},
  year={2021},
  organization={International Joint Conferences on Artificial Intelligence}
}

@book{golub2013matrix,
  title={Matrix computations},
  author={Golub, Gene H and Van Loan, Charles F},
  year={2013},
  publisher={JHU press}
}

@article{liu2024preconditioning,
  title={Preconditioning for physics-informed neural networks},
  author={Liu, Songming and Su, Chang and Yao, Jiachen and Hao, Zhongkai and Su, Hang and Wu, Youjia and Zhu, Jun},
  journal={arXiv preprint arXiv:2402.00531},
  year={2024}
}

@book{polyanin2008handbook,
  title={Handbook of integral equations},
  author={Polyanin, Polyanin and Manzhirov, Alexander V},
  year={2008},
  publisher={Chapman and Hall/CRC}
}

@article{li2024priori,
  title={A priori error estimate of deep mixed residual method for elliptic {PDEs}},
  author={Li, Lingfeng and Tai, Xue-Cheng and Yang, Jiang and Zhu, Quanhui},
  journal={Journal of Scientific Computing},
  volume={98},
  number={2},
  pages={44},
  year={2024},
  publisher={Springer}
}

@inproceedings{he2024mgno,
  title={MgNO: Efficient Parameterization of Linear Operators via Multigrid},
  author={He, Juncai and Liu, Xinliang and Xu, Jinchao},
  booktitle={12th International Conference on Learning Representations, ICLR 2024},
  year={2024}
}

@book{hemker1984some,
  title={Some implementations of multigrid linear system solvers},
  author={Hemker, Pieter Wilhelm and de Zeeuw, Paulus Maria},
  year={1984},
  publisher={Stichting Mathematisch Centrum}
}

@book{briggs2000multigrid,
  title={A multigrid tutorial},
  author={Briggs, William L and Henson, Van Emden and McCormick, Steve F},
  year={2000},
  publisher={SIAM}
}

@article{chen2022meta,
  title={Meta-mgnet: Meta multigrid networks for solving parameterized partial differential equations},
  author={Chen, Yuyan and Dong, Bin and Xu, Jinchao},
  journal={Journal of Computational Physics},
  volume={455},
  pages={110996},
  year={2022},
  publisher={Elsevier}
}

@article{kwerel1975bounds,
  title={Bounds on the probability of the union and intersection of m events},
  author={Kwerel, Seymour M},
  journal={Advances in Applied Probability},
  volume={7},
  number={2},
  pages={431--448},
  year={1975},
  publisher={Cambridge University Press}
}

@article{weinan2019priori,
  title={A priori estimates of the population risk for two-layer neural networks},
  author={Weinan, E and Ma, Chao and Wu, Lei},
  journal={Communications in Mathematical Sciences},
  volume={17},
  number={5},
  pages={1407--1425},
  year={2019},
  publisher={International Press of Boston, Inc.}
}

@inproceedings{lu2021priori,
  title={A priori generalization analysis of the deep Ritz method for solving high dimensional elliptic partial differential equations},
  author={Lu, Yulong and Lu, Jianfeng and Wang, Min},
  booktitle={Conference on learning theory},
  pages={3196--3241},
  year={2021},
  organization={PMLR}
}

@article{ohn2019smooth,
  title={Smooth function approximation by deep neural networks with general activation functions},
  author={Ohn, Ilsang and Kim, Yongdai},
  journal={Entropy},
  volume={21},
  number={7},
  pages={627},
  year={2019},
  publisher={MDPI}
}


%% file: references.bib
@Misc{amsmath,
  author =	 {{American Mathematical Society}},
  title =	 {User's Guide for the \texttt{amsmath} Package
                  (Version 2.0)},
  url =		 {ftp://ftp.ams.org/pub/tex/doc/amsmath/amsldoc.pdf},
  urldate =	 {2015-07-30},
  year =	 2002}
